\documentclass{amsart}
\usepackage[pdftex]{graphicx}
\usepackage{sidecap}
\usepackage{subfig}

\usepackage{fancyhdr}
\pagestyle{fancy}
\fancyhead{ \textsl{ A. Arakelyan\&  F. Bozorgnia } }
\fancyhead[L]{\thepage}
\fancyhead[R]{\textsl{}}

\fancyfoot{}

% theorem-like environments ---------------------
\theoremstyle{plain}
\newtheorem{theorem}{Theorem}[section]
\newtheorem{lemma}[theorem]{Lemma}
\newtheorem{proposition}[theorem]{Proposition}
\newtheorem{corollary}[theorem]{Corollary}
\theoremstyle{definition}
\newtheorem{definition}[theorem]{Definition}

\newtheorem{example}[theorem]{Example}
\theoremstyle{definition}
\theoremstyle{remark}
\newtheorem{remark}{Remark}
%\theoremstyle{assumption}

%\numberwithin{equation}{section}

\makeatletter
\def\@seccntformat#1{\csname the#1\endcsname.\quad}
\def\section{\@startsection{section}{1}{\z@}%
{-3.5ex \@plus -1ex \@minus -.2ex}%
{2.3ex \@plus.2ex}%
{\Large\bf}}
\def\subsection{\@startsection{subsection}{2}{\z@}%
{-3.25ex\@plus -1ex \@minus -.2ex}%
{1.5ex \@plus .2ex}%
{\bf\large}}
\def\subsubsection{\@startsection{subsubsection}{3}{\z@}%
{-3.25ex\@plus -1ex \@minus -.2ex}%
{1.5ex \@plus .2ex}%
{\normalfont\normalsize\bf}}

\def\blfootnote{\xdef\@thefnmark{}\@footnotetext}
\makeatother
% end environments -------------------

 \newcommand{\be}{\begin{equation}}
 \newcommand{\ee}{\end{equation}}
 \newcommand{\bd}{\begin{displaymath}}
 \newcommand{\ed}{\end{displaymath}}
 \newcommand{\bea}{\begin{eqnarray}}
 \newcommand{\eea}{\end{eqnarray}}
 \newcommand{\beas}{\begin{eqnarray*}}
 \newcommand{\eeas}{\end{eqnarray*}}
 \newcommand{\bc}{\begin{center}}
 \newcommand{\ec}{\end{center}}

 \title{Numerical Algorithms  for a Variational problem of the Spatial
Segregation of Reaction-diffusion Systems}

%%%% numbering should be changed

\author{Avetik Arakelyan}
\address{Institute of Mathematics, National Academy of Science of Armenia,
 0019  Yerevan, Armenia}
\email{avetik@math.kth.se}

\author{Farid Bozorgnia*}
\address{Faculty of Sciences, Persian Gulf University, Boushehr 75168, Iran}
\email{faridb@kth.se}
\thanks{*F. Bozorgnia was supported by a Grant No. 89350023 from IPM.
Corresponding author. Address: School of Mathematics, Institute for Research in Fundamental Sciences (IPM), P.O. Box 19395-5746, Tehran, Iran.}
%\email{bozorg@math.ist.utl.pt}
%\thanks{F. Bozorgnia  was supported by the UT Austin-Portugal partnership through
%the FCT post-doctoral fellowship
%SFRH/BPD/33962/2009 and grants PTDC/MAT/114397/2009, UT Austin/MAT/0057/2008. F.  Bozorgnia also thanks \emph {G\"{o}ran Gustafssons Foundation}  for
%visit to KTH}

%\subjclass[2000]{Primary: , }
\keywords{ Free boundary problems, Segregation,  Reaction-diffusion Systems, Finite Difference.}
\begin{document}
\begin{large}
%\hfill{ \tiny{Paper B}}
%\vskip 5mm
\maketitle
\renewcommand{\theequation}{1.\arabic{equation}}
\begin{abstract}
This paper is concerned with the numerical approximation of a class of stationary states for reaction-diffusion system with $m$  densities having disjoint support,  which are governed by a minimization problem. We use  quantitative properties of both, solutions and free boundaries, to derive our scheme. Furthermore, the proof of convergence of the numerical method is given in some particular cases. The proposed numerical scheme is applied   for the spatial segregation limit of diffusive Lotka-Volterra models in presence of high  competition and inhomogeneous Dirichlet boundary conditions. The numerical implementations of the resulting approach are discussed  and computational tests are presented.
\end{abstract}

\section{Introduction}
 \quad In recent years there have been intense studies of spatial segregation for reaction-diffusion systems. The existence of spatially inhomogeneous solutions for competition models of Lotka-Volterra type in the case of two and more competing densities  have been considered  \cite{MR2146353,MR2151234, MR2283921,MR2300320,MR1459414, MR1900331, MR2417905}.
 The objective of  this paper is to study numerical solutions of two classes  of possible segregation states. The first class is  related with an arbitrary number  of competing densities, which are governed by a minimization problem.

Let $\Omega  \subset \mathbb{R}^n, (n\geq 2)$ be a connected and  bounded domain with smooth boundary, and  $m$ be a fixed integer.
We consider the steady-states of $m$ competing species coexisting in  the same area $\Omega$.
Let $u_{i}(x)$ denotes  the population density of the  $i^\textrm{th}$ component with the internal dynamic  prescribed by $f_{i}(x)$. Here  we assume that $f_i$ is uniformly  continuous   and    $f_{i}(x) \geq 0.$

  The $m$-tuple $U=(u_1,\cdots,u_m)\in (H^{1}(\Omega))^{m},$ is called \emph{segregated state} if
\[
u_{i}(x) \cdot  u_{j}(x)=0,\  \text{a.e. } \text{ for  } \quad i\neq j,  \ x\in \Omega.
\]

\textbf{Problem (A):}
Consider the following minimization problem
 \begin{equation}\label{1}
\text{  Minimize  }  E(u_1, \cdots, u_m)=\int_{\Omega}  \sum_{i=1}^{m} \left( \frac{1}{2}| \nabla u_{i}|^{2}+f_{i}u_{i}  \right) dx,
\end{equation}
  over the set
  $$S={\{(u_1,\dots,u_{m})\in (H^{1}   (\Omega))^{m} :u_{i}\geq0, u_{i} \cdot u_{j}=0, u_{i}=\phi_{i} \quad \text {on} \quad \partial  \Omega}\},$$
where $\phi_{i} \in H^{\frac{1}{2}}(\partial \Omega),$\; $\phi_{i}  \cdot \phi_{j}=0,$ for $i\neq j$ and $ \phi_{i}\geq 0$ on the boundary $\partial \Omega.$ We assume that $f_i$ is uniformly  continuous   and    $f_{i}(x) \geq 0.$

\textbf{Problem (B): }
Our second  problem,  which appears in the study of population ecology, is the case when
 high  competitive interactions between different species occurs.
 As the rate of interaction of two different species goes to infinity, the  competition-diffusion system
shows a limiting configuration with segregated state.
We refer the reader to \cite{MR2151234,MR2363653, MR2079274, MR1303035, MR1459414, MR1687440, MR1900331}  and in particular to \cite{MR1687440} for models involving Dirichlet boundary data. A complete analysis of the stationary case has been studied  in \cite{MR2151234}. Also  numerical simulation   for the spatial segregation limit of two diffusive Lotka-Volterra models in presence of strong competition and inhomogeneous Dirichlet boundary conditions is provided in \cite{MR2459673}.  In \cite{MR2459673} the authors solve the problem for small  $\varepsilon$ and then let $\varepsilon  \longrightarrow 0 ,$ while in our work we use the qualitative properties of the limiting problem. Unlike the results in  \cite{MR2459673}, where the authors provide only simulations of their proposed algorithm, we give a numerical consistent variational system with strong interaction, and provide disjointness condition of populations during the iteration of the scheme. Moreover, by discussing these two problems we show that the proposed idea can be generalized for two or more species that competing each other.

 Let $d_{i}, \lambda $ be positive numbers. Consider the following system of $m$ differential equations

\begin{equation}
\left \{
\begin{array}{lll}\label{2}
-d_{i}\Delta u_{i} = \lambda u_{i}(1 - u_{i})-\frac{1}{\varepsilon} u_{i} \sum_{j\neq i} u^{2}_{j}   & \text{ in } \Omega,\\
    u_{i}(x, y) = \phi_{i}(x, y) & \text{ on } \partial \Omega,\\
\end{array}
\right.
\end{equation}
for $ i=1,\cdots ,m,$  where $\phi_{i} \in H^{\frac{1}{2}}(\partial \Omega)$ and $\phi_{i}  \cdot \phi_{j}=0, \, \phi_{i}\geq 0$ on the boundary $\partial \Omega.$  Our aim is to present a numerical approximation for this system as $\varepsilon \rightarrow 0.$ This system can be viewed as a steady state of the following auxiliary system in the case that the boundary values are time independent:

\begin{equation}
\left \{
\begin{array}{llll}
\frac{d}{dt} u_{i}- d_{i} \Delta u_{i} = \lambda u_{i}(1 - u_{i})-\frac{1}{\varepsilon} u_{i} \sum_{j\neq i} u^{2}_{j}   & \text{ in } \Omega \times (0, \infty ),\\
    u_{i}(x, y, t) = \phi_{i}(x, y) & \text{ on } \partial \Omega \times (0, \infty ),\\
     u_{i}(x, y, 0) = u_{i,0}(x, y)   & \text{ in } \Omega,\\
\end{array}
\right.
\end{equation}
for $i=1,\cdots ,m.$\\
One of the interesting results which  relates these  two problems   is given in \cite{MR2146353}.  Consider  the following  reaction-diffusion system of three competing species:
\begin{equation}\label{3}
\Delta  u_i = \frac{1}{\varepsilon} u_i \sum_{j\neq i} u_j,  \quad  u_i\geq 0, \text{ in } \Omega,\ u_i=\phi_i,\ \  \text{ on  } \partial \Omega \    i = 1, 2, 3,
\end{equation}
where we have  the same assumptions  on the  boundary values $\phi_i.$
 In \cite{MR2146353} it was shown the  uniqueness of the limiting configuration as $\varepsilon \rightarrow 0$ on a planar domain, with appropriate boundary conditions. Moreover, it was shown  that the corresponding minimization problem admits a unique solution, and the limiting configuration minimizes the following energy
\[
 \int_{\Omega}  \sum_{i=1}^{3}  \frac{1}{2}| \nabla u_{i}|^{2} dx,
\]
 over the set $S={\{u_{i}\in H^{1}   (\Omega):u_{i}\geq0, u_{i} \cdot u_{j}=0, u_{i}=\phi_{i} \quad \text {on} \,  \partial  \Omega,}\; i=1,2,3 \}.$
  For the numerical approximation of the  system (\ref{3}) the interested reader is referred to \cite{MR2563520}.

\section{Basic facts   for Problem (A) }
\renewcommand{\theequation}{2.\arabic{equation}}
\setcounter{equation}{0}
In  this section we will see that the solution of problem (\ref{1}) satisfies  a free boundary problem. In order to prove the existence of the minimizer we apply  the following classical theorem due to  \cite{MR1078018}.

\begin{theorem}\label{5}
 Let $V$ be  a reflexive Banach space with norm  $\| \cdot\|,$  and  $M\subset V$ be a weakly closed subset of $V$. Suppose $E : M\rightarrow \mathbb{R}$ is coercive on $M$ with respect to $V,$ that is
\begin{itemize}
\item [i)]  $ E(u)\rightarrow \infty $  as $\|u\|\rightarrow \infty, $  $u\in M$
and  $E$ is weakly lower semi-continuous on $M$ with respect to $V,$ that is
\item [ii)] for any $u\in M,$ any sequence $(u_{m})$ in $M$ such that $u_{m}\rightharpoonup u$ weakly in $V$ there holds
    $     E(u) \leq  \underset{m\rightarrow \infty} {\liminf} E(u_{m}).$
    \end{itemize}
 Then $E$ is bounded from below on $M$ and attains  its minimum    in $M.$
 \end{theorem}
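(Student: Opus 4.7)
The plan is to invoke the direct method of the calculus of variations. Set $\alpha = \inf_{u\in M} E(u) \in [-\infty,+\infty)$ (assuming $M$ nonempty, otherwise the statement is vacuous), and pick a minimizing sequence $(u_m) \subset M$ with $E(u_m) \to \alpha$.

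The first step is to show $(u_m)$ is bounded in $V$. If not, along a subsequence $\|u_{m_k}\| \to \infty$, and coercivity (i) forces $E(u_{m_k}) \to \infty$, contradicting $E(u_{m_k}) \to \alpha$. Once boundedness is established, reflexivity of $V$ together with the Banach--Alaoglu / Eberlein--\v Smulian theorem yields a subsequence, still denoted $(u_{m_k})$, and some $u \in V$ with $u_{m_k} \rightharpoonup u$ weakly in $V$. Since $M$ is weakly closed by assumption, the limit $u$ lies in $M$.

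The second step is to identify $u$ as a minimizer. Applying the weak lower semi-continuity hypothesis (ii) to the weakly convergent sequence $(u_{m_k})$ gives
\[
E(u) \;\le\; \liminf_{k\to\infty} E(u_{m_k}) \;=\; \alpha.
\]
Since $u \in M$, the definition of the infimum forces $E(u) \ge \alpha$, hence $E(u)=\alpha$. In particular $\alpha > -\infty$, which simultaneously proves $E$ is bounded from below and that the minimum is attained at $u$.

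Because the argument is a direct assembly of the two hypotheses plus reflexivity, there is essentially no technical obstacle; the only delicate point to spell out clearly is the dichotomy $\alpha$ finite vs. $\alpha = -\infty$, which is handled uniformly by the coercivity step (any minimizing sequence must be bounded, since otherwise a subsequence would send $E$ to $+\infty$, which is incompatible with convergence to $\alpha < +\infty$). The rest is bookkeeping.
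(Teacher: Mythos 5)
Your argument is correct: it is the standard direct method of the calculus of variations, and the handling of the a priori possibility $\inf_M E=-\infty$ (ruled out because the weak limit $u$ lies in $M$, so $E(u)$ is a real number squeezed between $\liminf E(u_{m_k})$ and the infimum) is done properly. Note that the paper does not actually prove this statement; it quotes it as a classical theorem from the cited reference, and your proof is precisely the textbook argument that reference gives, so there is no methodological divergence to report.
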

Then we have the following existence and uniqueness result.
\begin{proposition}
Under the assumptions in Problem (A), there exist a  minimizer to \eqref{1}, and it is unique.
\end{proposition}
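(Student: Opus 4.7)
The plan is to apply Theorem~\ref{5} with $V = (H^1(\Omega))^m$ and $M = S$. First, I note that uniform continuity of $f_i$ on the bounded domain $\Omega$ together with compactness of $\overline{\Omega}$ gives $f_i \in L^\infty(\Omega)$, so the lower-order term $U \mapsto \int_\Omega f_i u_i$ is a bounded linear functional on $L^2(\Omega)$, hence weakly continuous on $(H^1(\Omega))^m$. Weak closedness of $S$ follows because any weakly convergent sequence $U^k \rightharpoonup U$ in $(H^1)^m$ converges, by Rellich--Kondrachov, strongly in $(L^2)^m$ and pointwise a.e.\ along a subsequence: the pointwise constraints $u_i \geq 0$ and $u_i u_j = 0$ then pass to the limit, while the boundary condition $u_i = \phi_i$ is preserved by weak continuity of the trace operator $H^1(\Omega) \to H^{1/2}(\partial\Omega)$.

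For coercivity, the sign conditions $f_i, u_i \geq 0$ give $E(U) \geq \tfrac{1}{2} \sum_{i=1}^m \|\nabla u_i\|_{L^2}^2$. Fixing an $H^1$-extension $\tilde{\phi}_i$ of $\phi_i$ and applying the Poincaré inequality to $u_i - \tilde{\phi}_i \in H^1_0(\Omega)$ yields $\|u_i\|_{H^1} \leq C(\|\nabla u_i\|_{L^2} + 1)$, so $E(U) \to \infty$ whenever $\|U\|_{(H^1)^m} \to \infty$. For weak lower semi-continuity, each $U \mapsto \int |\nabla u_i|^2$ is convex and strongly continuous on $(H^1)^m$, hence weakly l.s.c., while the $\int f_i u_i$ terms are weakly continuous by the remark above. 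Theorem~\ref{5} then produces a minimizer of $E$ on $S$.

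For uniqueness, note that $S$ is \emph{not} convex because of the disjointness constraint, so convexity of $E$ alone is insufficient. The key observation is that $u_i u_j = 0$ a.e.\ forces $\nabla u_i \cdot \nabla u_j = 0$ a.e., so that the sum $w = \sum_i u_i$ satisfies $|\nabla w|^2 = \sum_i |\nabla u_i|^2$. In the two-species case ($m = 2$), the substitution $v = u_1 - u_2$ recasts the problem as the minimization of the strictly convex functional
$$\tilde{E}(v) = \int_\Omega \Bigl( \tfrac{1}{2}|\nabla v|^2 + f_1 v^+ + f_2 v^- \Bigr) dx$$
over the affine space $\{v \in H^1(\Omega) : v|_{\partial\Omega} = \phi_1 - \phi_2\}$, and the unique minimizer reconstructs $(u_1, u_2) = (v^+, v^-)$.

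The main obstacle is the uniqueness for general $m \geq 3$, where the scalar reduction above is no longer available. The approach I would pursue is to derive the Euler--Lagrange variational inequalities for a minimizer (roughly, $-\Delta u_i + f_i = 0$ in $\{u_i > 0\}$ together with a free-boundary transmission condition), show that two minimizers $U$, $V$ give rise to the same scalar ``envelope'' $w = \sum_i u_i = \sum_i v_i$ by reducing to a scalar obstacle-type problem on $w$ that admits a unique solution, and then recover the individual densities from $w$ and the induced partition of $\{w > 0\}$. If this turns out to require more than the quantitative tools developed so far, I would invoke the uniqueness results available in the segregation literature (as cited in the introduction) to close the argument.
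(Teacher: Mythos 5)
Your existence argument is sound and is essentially the paper's (the direct method via Theorem~\ref{5}), with more of the routine details -- coercivity through Poincar\'e on $u_i-\tilde\phi_i$, weak closedness of $S$ via Rellich and a.e.\ convergence -- spelled out. Your $m=2$ uniqueness via the scalar reduction $v=u_1-u_2$ and strict convexity of $\tilde E$ on the affine class is also correct; it is exactly the two-phase membrane equivalence the paper records in Section~2.1.

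The genuine gap is uniqueness for $m\geq 3$, which you acknowledge you cannot close. The fallback plan you sketch does not work as stated: the envelope $w=\sum_i u_i$ is \emph{not} governed by a well-posed scalar obstacle-type problem, because across an interface $\Gamma_{ij}$ one has $\nabla u_i=-\nabla u_j$, so $\nabla w$ flips sign there, $w$ fails to be $C^1$, and $\Delta w$ picks up a singular part supported on the interfaces; moreover, even a unique $w$ would not determine the partition of $\{w>0\}$ among the $m$ species. The paper's proof shows that your $m=2$ idea does generalize, but with the correct signed combination: for two minimizers $U$ and $V$ one sets $\overline{u}_{i}=u_{i}-\sum_{k\neq i}u_{k}$ (which for $m=2$ is exactly your $v=u_1-u_2$) and $\overline{v}_{i}=v_{i}-\sum_{k\neq i}v_{k}$, then defines the competitor $w_{i}=\tfrac12\max(\overline{u}_{i}+\overline{v}_{i},0)$. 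One checks that $(w_1,\dots,w_m)\in S$, and the strict convexity of the Dirichlet energy together with $f_i\geq 0$ and the disjointness of supports yields $E(w_1,\dots,w_m)<\tfrac12\bigl(E(U)+E(V)\bigr)=c$, contradicting minimality. This "midpoint of the signed differences, then positive part" construction is the missing idea; without it (or an equivalent device) your proof is complete only for $m\leq 2$.
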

\begin{proof}
%Proof of uniqueness. \\
It is easy to see that the functional $E(u_1, \cdots, u_m)$ is coercive over the closed set $S,$ and lower semi-continuous on $S$ with respect to the space $(H^1(\Omega))^m.$ Thus the existence follows directly from above mentioned Theorem \ref{5}.
For the proof of uniqueness we  are using the same arguments as in \cite[Theorem $4.1$]{MR2151234}.
Suppose  that there exist  two different minimizers
 $U=(u_1,\cdots ,u_{m})$ and $V=(v_1,\cdots ,v_{m})$  of  \eqref{1}  such that
\be
E(u_1, \cdots , u_m)=E(v_1, \cdots , v_m)=c.
\ee
Define new functions $\overline{u}_{i}, \overline{v}_{i}$ by
\begin{equation*}
 \overline{u}_{i}(x)=u_{i}(x)-\sum_{k \neq i}u_{k}(x),
\end{equation*}
\begin{equation*}
\overline{v}_{i}(x)=v_{i}(x)-\sum_{k \neq i}v_{k}(x),
\end{equation*}
 and let
\begin{equation*}
w_{i}(x)=\frac{1}{2}\text{max}\left(\overline{u}_{i}(x) + \overline{v}_{i}(x) ,  0\right).
\end{equation*}
Define% $\Omega_{i}$   denotes a positivity set of $w_{i},$ that is
\[
\Omega_{i}=\{x\in \Omega : w_{i}(x) >0\}.
\]
It is easy to show that $w_{i}\geq0,$  $w_{i} \cdot w_{j}=0$ for $ i\neq j$  and $w_{i}=\phi_{i}$ on $\partial \Omega.$ Moreover we have:
\begin{equation*}
E(w_1, \cdots, w_m)=\int_{\Omega}  \sum_{i=1}^{m}( \frac{1}{2}| \nabla w_{i}|^{2}+f_{i}w_{i}) \, dx.
\end{equation*}
 Using that  $u_{i} \cdot u_{j}=0$ and $v_{i} \cdot v_{j}=0$ for  $i \neq j,$  we obtain the following estimate:
\begin{equation}
\begin{split}\label{estimate}
\int_{\Omega}\sum_{i=1}^{m}\frac{1}{2}| \nabla w_{i}|^{2}\,dx = \sum_{i=1}^{m}\int_{\Omega_{i}} \frac{1}{8}| \nabla \overline{u}_{i}+\nabla \overline{v}_{i}|^{2} \, dx < \int_{\Omega_i}  \sum_{i=1}^{m}\frac{1}{4}(|\nabla \overline{u}_{i}|^2+|\nabla \overline{v}_{i}|^{2})\, dx\\
\leq \sum_{i=1}^{m} \int_{\Omega}  \frac{1}{4}(|\nabla{u}_{i}|^2+|\nabla {v}_{i}|^{2}) \, dx = \frac{1}{2} \left(\sum_{i=1}^{m} \int_{\Omega} \frac{1}{2}|\nabla{u}_{i}|^2 + \int_{\Omega} \sum_{i=1}^{m}\frac{1}{2}|\nabla{v}_{i}|^2\right).
 \end{split}
\end{equation}
The potential part can also be estimated as follows:
\begin{equation}\label{estimate_2}
\int_{\Omega}  \sum_{i=1}^{m}f_{i} w_{i} \, dx = \int_{\Omega}  \sum_{i=1}^{m}\frac{1}{2}f_{i} \max(\overline{u}_{i}(x) + \overline{v}_{i}(x) , 0)\, dx \leq \int_{\Omega}  \sum_{i=1}^{m}\frac{1}{2}f_{i}  (u_{i}+v_{i}) \, dx,
\end{equation}
where in the  last inequality we have used the fact that   $f_{i}$ is  positive,  ($i=1, \cdots,m$). Finally, by  adding \eqref{estimate} and \eqref{estimate_2}  we obtain
\begin{equation*}
E(w_1, \cdots, w_m) < \frac{1}{2}[E(u_1, \cdots, u_m)+ E(v_1, \cdots, v_m)] = c,
\end{equation*}
which is a contradiction. This completes the proof of Proposition.
\end{proof}

In this part we state some results that will be used in the construction of our numerical scheme. The next Lemma shows that the minimizer of the variational problem satisfies certain differential inequalities.

\begin{lemma}\label{8}
Let  $U=(u_1,\cdots, u_m)$ be a minimizer of Problem (A),  then the following holds in the sense of distributions.
\begin{equation*}
 \Delta u_{i}\geq f_{i}(x)\chi_{\{u_i>0\}}.
%\item  $-\Delta \widehat{u}_{i}\geq \widehat{f}$ (x).
\end{equation*}
\end{lemma}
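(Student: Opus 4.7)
The strategy is to exploit minimality of $U$ by comparing it with a downward one-sided perturbation in the single component $u_i$. Fix $i$ and pick a nonnegative test function $\varphi\in C_c^\infty(\Omega)$. For small $t>0$ set $\tilde u_i=(u_i-t\varphi)^+$ and leave the other components $u_j$, $j\neq i$, unchanged. First I would verify that $\tilde U=(u_1,\dots,\tilde u_i,\dots,u_m)$ belongs to the admissible class $S$: nonnegativity is built into the truncation; the disjointness $\tilde u_i\cdot u_j=0$ follows from $0\le\tilde u_i\le u_i$ together with $u_i u_j=0$; and the boundary datum $\tilde u_i=\phi_i$ on $\partial\Omega$ is preserved because $\varphi$ has compact support.

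Next I would compute $E(\tilde U)-E(U)$ using the chain rule $\nabla(u_i-t\varphi)^+=\chi_{A_t}(\nabla u_i-t\nabla\varphi)$ a.e., where $A_t=\{u_i>t\varphi\}$. Splitting the integration domain yields
\begin{equation*}
E(\tilde U)-E(U)=\int_{A_t}\bigl(-t\nabla u_i\cdot\nabla\varphi+\tfrac{t^2}{2}|\nabla\varphi|^2-tf_i\varphi\bigr)\,dx+\int_{\Omega\setminus A_t}\bigl(-\tfrac{1}{2}|\nabla u_i|^2-f_iu_i\bigr)\,dx.
\end{equation*}
Since $f_i\ge0$ and $u_i\ge0$, the second integrand is nonpositive, and minimality $E(\tilde U)\ge E(U)$ forces the first integral to be nonnegative. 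Dividing by $t>0$ and letting $t\to0^+$, the fact that $\chi_{A_t}\to\chi_{\{u_i>0\}}$ pointwise a.e.\ together with dominated convergence (with $L^1$ majorant built from $|\nabla u_i|\,|\nabla\varphi|+f_i\varphi$) gives
\begin{equation*}
-\int_{\{u_i>0\}}\nabla u_i\cdot\nabla\varphi\,dx-\int_{\{u_i>0\}}f_i\varphi\,dx\ge 0.
\end{equation*}

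Finally, I would invoke the standard Sobolev identity $\nabla u_i=0$ a.e.\ on $\{u_i=0\}$ to extend the gradient integral to all of $\Omega$, so the inequality rewrites as $-\int_\Omega\nabla u_i\cdot\nabla\varphi\,dx\ge\int_\Omega f_i\chi_{\{u_i>0\}}\varphi\,dx$ for every nonnegative $\varphi\in C_c^\infty(\Omega)$, which is precisely $\Delta u_i\ge f_i\chi_{\{u_i>0\}}$ in the sense of distributions. The step I expect to be the main technical obstacle is the passage to the limit on the moving sublevel set $A_t$ and the handling of the $1/t$-scaled contribution coming from $\Omega\setminus A_t$; this is overcome precisely because that contribution has a favorable sign and can be dropped before taking the limit, so strengthening rather than weakening the resulting inequality.
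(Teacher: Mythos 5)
Your proposal is correct and follows essentially the same route as the paper: both perturb the single component $u_i$ to the competitor $(u_i-t\varphi)^+$ with $\varphi\ge 0$, check admissibility in $S$, and extract the distributional inequality from $E(\tilde U)\ge E(U)$ after dividing by $t$ and letting $t\to 0^+$. Your version merely makes explicit (via the splitting over $A_t=\{u_i>t\varphi\}$ and the sign of the contribution from $\Omega\setminus A_t$) what the paper compresses into its $o(\varepsilon)$ term.
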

\begin{proof}
One needs to show that for each $i=1, \cdots m$, and  test function $\phi \in C^{\infty}_{c}(\Omega)$ the following inequality holds:
\[
\int_{\Omega}\nabla u_{i} \cdot \nabla \phi +f_{i}\chi_{\{u_i>0\}}\phi\, dx  \leq 0.
\]
For $ 0< \varepsilon<<1,$ and fixed $i$ we define new functions $(v_{1},\cdots v_{m})$ as follows:
\[v_1=u_1,\;\;v_2=u_2,\dots,v_i=(u_{i}-\varepsilon \phi)^{+},\dots,v_m=u_m.\]
It is easy to show that,
 \[v_i\cdot v_j=0,\;\; \text{whenever}\;\; i\neq j,\; \text{and}\; v_i=\phi_i\; \text{on the boundary of}\;\; \Omega.\]
%\begin{equation*}
%v_{j}=
%\left \{
%\begin{array}{ll}
% u_{j}  \quad   & j\neq i,\\
% (u_{i}-\varepsilon \phi)^{+}  &  j = i.
%\end{array}
%\right.
%\end{equation*}
Denote $V=(v_{1},\cdots v_{m}).$ We  have $E(U)\leq E(V),$ therefore
\[
0 \leq E(V)- E(U)=\int_{\Omega}|\nabla( u_{i} -\varepsilon \phi)^{+}|^{2}- |\nabla u_{i} |^{2}+f_{i}(( u_{i} -\varepsilon \phi)^{+}-u_{i})\, \phi\, dx
 \]
 \[
 = \int_{\Omega}|\nabla( u_{i} -\varepsilon \phi)^{+}|^{2}- |\nabla u_{i} |^{2}\,dx +\int_{\Omega}f_{i}\chi_{\{u_i>0\}}(( u_{i} -\varepsilon \phi)^{+}-u_{i})\, \phi\, dx
 \]
\[
    \leq-\varepsilon \int_{\Omega} \left(\nabla u_{i}\cdot \nabla \phi + f_{i}\chi_{\{u_i>0\}} \phi \,\right) dx + o(\varepsilon).
\]
Thus
\[
\int_{\Omega}\nabla u_{i} \cdot \nabla \phi +f_{i}\chi_{\{u_i>0\}}\phi\, dx  \leq 0.
\]

\end{proof}

%For any  minimizer $U=(u_1, \cdots, u_m),$  define the positive set   of $u_i$ as
%\begin{equation*}
%\Omega_{i}=\left \{ x \in \Omega: u_{i}(x)>0\right\}.
%\end{equation*}
\begin{definition}
The multiplicity of a point $x \in \overline{\Omega}$ is defined by:
%maybe better to change notation m(x)
\begin{equation*}
m(x)=\text{card} \left\{i:meas (\Omega_{i} \cap B(x,r))>0 \  \text {for some} \ r>0\right\},
\end{equation*}
and the interface between two densities is defined as:
\begin{equation*}
\Gamma _{i,j}=\partial \Omega_{i} \cap \partial \Omega_{j} \cap{\{x \in \Omega :m(x)=2}\}.
\end{equation*}
\end{definition}
Our numerical scheme is based on the following properties, which are straightforward to verify.
\begin{corollary}\label{9}
Assume that $x_{0} \in \Omega$ then the following holds:\\
1) If $m(x_0)=0,$ then there exists $r>0$ such that for every $i=1,\cdots m$; $u_i\equiv0$ on  $B(x_0,r).$ \\
2) If $m(x_0)=1,$ then there are $i$  and $r>0$ such that in $ B(x_0,r)$
\[
\Delta u_i=f_i, \quad \ \quad u_j\equiv0 \quad \text{for  } j\neq i.
\]
3) If $m(x_0)=2,$ then there are $i, j $ and  $r>0$ such that for every $k$  and $k\neq i,j$ we have $ u_k\equiv 0$ and
\[
\Delta(u_i-u_j) =f_{i}\chi_{\Omega_i}-   f_{j}\chi_{\Omega_j}  \text{ in  }  B(x_0,r).
\]
\end{corollary}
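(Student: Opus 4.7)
The plan is a case analysis on $m(x_0)$: in each regime, localize the variational problem to a small ball around $x_0$ and perform admissible perturbations respecting segregation and non-negativity. Case (1) is immediate from the definition of multiplicity, which yields, for every $i$, a radius $r_i > 0$ with $\mathrm{meas}(\Omega_i \cap B(x_0, r_i)) = 0$, so (after choosing a continuous representative) $u_i \equiv 0$ on $B(x_0, r_i)$; the common ball $B(x_0, r)$ with $r := \min_i r_i$ does the job.

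For (2), shrink $r$ so that $u_j \equiv 0$ in $B(x_0, r)$ for every $j \ne i$ and so that $u_i > 0$ throughout the ball (using that $m(x_0) = 1$ forces $x_0$ to lie in the interior of $\Omega_i$). For any $\phi \in C_c^\infty(B(x_0, r))$ and sufficiently small $|\varepsilon|$, both perturbations $u_i \pm \varepsilon \phi$ remain non-negative and are trivially disjoint from the identically zero competitors, so each is admissible. Vanishing of the first variation in both directions upgrades the one-sided inequality of Lemma~\ref{8} to the equality $\Delta u_i = f_i$ in $B(x_0, r)$.

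For (3), choose $r$ so that only $u_i$ and $u_j$ may be positive in $B(x_0, r)$ and introduce $w := u_i - u_j$. Because $\nabla u_i$ and $\nabla u_j$ are supported on the disjoint sets $\Omega_i$ and $\Omega_j$, one has $u_i = w^+$, $u_j = w^-$, and $|\nabla u_i|^2 + |\nabla u_j|^2 = |\nabla w|^2$, so that the ball's contribution to $E$ equals
\[
\int_{B(x_0, r)} \Bigl( \tfrac12 |\nabla w|^2 + f_i w^+ + f_j w^- \Bigr) dx.
\]
For $\phi \in C_c^\infty(B(x_0, r))$ and arbitrary $\varepsilon \in \mathbb{R}$, the perturbation $w \mapsto w + \varepsilon \phi$ yields the admissible competitor $\bigl((w+\varepsilon\phi)^+, (w+\varepsilon\phi)^-\bigr)$ since segregation and non-negativity are now automatic. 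Setting the first variation to zero and integrating by parts gives the distributional identity $\Delta w = f_i \chi_{\Omega_i} - f_j \chi_{\Omega_j}$, which is exactly the claim.

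The main subtlety, rather than a true obstacle, is justifying the two-sided admissibility of the variations in parts (2) and (3): the multiplicity hypothesis is precisely what lets one shrink $r$ so that $u_i$ or $w$ has locally fixed sign on the pieces identified with each density, after which the Euler--Lagrange calculations are routine.
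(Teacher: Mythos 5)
Your parts (1) and (3) are essentially the argument the authors presumably have in mind (the paper itself offers no proof, calling the corollary ``straightforward to verify''): (1) is indeed pure bookkeeping with the definition of multiplicity, and in (3) the reduction of the localized energy to the two-phase membrane functional $\int \tfrac12|\nabla w|^2+f_iw^++f_jw^-$ with the unconstrained competitor $\bigl((w+\varepsilon\phi)^+,(w+\varepsilon\phi)^-\bigr)$ is the right device. The only gloss in (3) is that $v\mapsto f_iv^++f_jv^-$ is not differentiable at $v=0$, so ``setting the first variation to zero'' really produces a two-sided inequality whose defect is a term supported on $\{w=0\}$; one must then invoke $\Delta w=0$ a.e.\ on $\{w=0\}$ (available since $\Delta w\in L^\infty$ gives $w\in W^{2,p}_{\loc}$) to land on the stated identity, which correctly vanishes on $\{w=0\}$. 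That is the standard two-phase obstacle argument and is fine.

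Part (2), however, contains a genuine gap. You assert that $m(x_0)=1$ ``forces $x_0$ to lie in the interior of $\Omega_i$,'' and you need this both to make the two-sided perturbations $u_i\pm\varepsilon\phi$ admissible and to obtain $\Delta u_i=f_i$ rather than $f_i\chi_{\{u_i>0\}}$. This does not follow from the definition: multiplicity only records that $\{u_i>0\}$ has positive measure in some ball about $x_0$, not that $x_0$ belongs to that set. A free boundary point of the one-phase obstacle problem for $u_i$ (all other densities vanishing identically nearby, which certainly occurs for suitable $f_i\ge 0$ and boundary data) has $m(x_0)=1$, yet $u_i$ vanishes on a set of positive measure in every ball around it, and there $\Delta u_i=f_i\chi_{\{u_i>0\}}\neq f_i$. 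What the variational argument actually delivers at such a point is: Lemma \ref{8} gives $\Delta u_i\ge f_i\chi_{\{u_i>0\}}$, while the one-sided perturbation $u_i+\varepsilon\phi$ with $\phi\ge0$ (always admissible once the other components vanish on the ball) gives $\Delta u_i\le f_i$; combined with $\Delta u_i=0$ a.e.\ on $\{u_i=0\}$ this yields $\Delta u_i=f_i\chi_{\{u_i>0\}}$ in $B(x_0,r)$. So either an additional hypothesis $x_0\in\Omega_i$ (with $\Omega_i$ open, using continuity of $u_i$) must be imposed to justify your interiority step, or the conclusion of (2) should be recorded in the obstacle-problem form; as literally stated, the claim cannot be proved because it fails at one-phase free boundary points, which do have multiplicity one.
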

\subsection{Special cases of Problem (A)}
We note that the  One Phase Obstacle problem and the  Two-Phase Membrane problem  are special cases  of Problem (A)  for $m=1$  and $m=2,$ respectively. Here we  briefly explain these two problems and refer the reader  about variational inequalities to \cite{MR737005} and for
the Two-Phase Membrane problem to \cite{MR1620644}.

\begin{itemize}
\item  One Phase Obstacle problem ($m=1$).   Consider the following energy functional
\begin{equation}\label{10}
\min E(u)=\int_{\Omega}  \left(\frac{1}{2}| \nabla u|^{2}+fu\right )dx,
\end{equation}
 over the convex set $={\{u \in H^{1}(\Omega) : u \geq0, u=\phi\geq 0 \quad \text {on} \quad \partial  \Omega}\}.$
%change the following sentence  also for cases  on m do itemize
 The minimizer of \eqref{10}   satisfies the following  Euler-Lagrange equation
 \begin{equation}\label{11}
\left \{
\begin{array}{lll}
    \Delta u=f \chi_{\{u>0 \}}   & \text {in } \Omega ,\\
    u=\phi                                    &  \text   {on } \partial \Omega , \\
  u=| \nabla u |=0     &  \text {in } \Omega\backslash{\{u>0}\}.
   \end{array}
\right.
   \end{equation}

\item Two-Phase Membrane problem ($m=2$).

Let  $f_i:\Omega \rightarrow \mathbb{R},$ $i=1,2,$ be non-negative  Lipschitz continuous functions, where    $\Omega$ is  a bounded open subset of $\mathbb{R}^n$ with smooth  boundary.
Let
\[
K=\{ v \in W^{1,2}(\Omega): v-g \in W^{1,2}_{0}(\Omega) \},
\]
 where $g$ changes the sign on the boundary. Consider the functional
\begin{equation}\label{13}
I(v)=\int_{\Omega}\left(\frac{1}{2}|\nabla v|^2+f_{1} \text {max}(v,0)-f_{2}\text {min}(v,0)\right)dx,
\end{equation}
 which  is convex, weakly lower semi-continuous, and hence   attains its  infimum at some point  $u \in K$. In functional (\ref{13})  set
 \begin{align*}
u_{1} &=v^{+},  \quad    u_{2}=v^{-}, \\
g_{1} &=g^{+},  \quad    g_{2}=g^{-},
\end{align*}
 where $v^{\pm}=\max(\pm v,0)$. Then the functional $I(v)$ in \eqref{13} can be rewritten as
 \begin{equation}\label{functional}
I(u_1,\,u_2)=\int_{\Omega}\left(\frac{|\nabla u_{1}|^2}{2}+\frac{ |\nabla u_{2}|^{2}}{2}+ f_{1}u_1+ f_{2}u_{2} \right)dx,
\end{equation}
where minimization is over the set
 $$S={\{(u_1,\,u_{2})\in (H^{1}(\Omega))^{2} : u_{1} \cdot u_{2}=0,\, u_{i}\geq0, \quad u_{i}=g_{i} \quad \text {on} \quad \partial  \Omega, \, i=1,2}\}.$$
The Euler-Lagrange  equation  corresponding to the  minimizer  $u$ is given in   (\cite{MR1620644}), which is called the Two-Phase Membrane problem:
\begin{equation}\label{14}
\left \{
\begin{array}{ll}
\Delta u= f_{1} \chi_{\{u >0 \}}-f_{2} \chi_{\{u <0\}}  &  \text{in} \ \Omega, \\
  u=g\     & \text{on} \ \partial \Omega,
  \end{array}
\right.
\end{equation}
where  $ \Gamma(u) =\partial \{ x \in \Omega: u(x)>0 \}  \cup \partial\{x\in \Omega: u(x) < 0\} \cap \Omega $ is called  the \emph{free boundary.}

\end{itemize}

\subsection{Numerical approximation of Problem (A)}
In this section we present our numerical scheme, which is based on the properties in Corollary \ref{9}. It means that if $m(x)=1, x  \in B_{r},$ then  our scheme solves $ \Delta u_i=f_i$ locally. For all $x$ such that  $m(x)=2,$  the scheme solves
 \[
 \Delta(u_i-u_j) =f_{i}\chi_{\{u_i > 0\}}-   f_{j}\chi_{\{u_j   > 0\}}.
 \]
To explain our method, first  let $m=2.$ We have
\begin{equation}\label{15}
\Delta (u_{1}-u_{2})= f_1 \chi_{\{u_{1} >0\}}-f_{2} \chi_{\{u_{2} >0\}}.
\end{equation}
Equation \eqref{15} shows that $\Delta (u_{1}-u_{2})$ is bounded and therefore by classical results for elliptic PDE we have $u_{1}-u_{2} \in C^{1,\alpha}$ for $\alpha <1$.  Thus, on  the free boundary we have
\[
\nabla u_{1}= -\nabla u_{2}.
\]
For a given uniform  mesh on $\Omega\subset \mathbb{R}^2,$ we define $\overline{u}_{k}(x_{i},y_{j})$ to be the average of $u_{k}$ for all neighbor points of $(x_{i},y_{j}),$ where $k=1,2.$ Thus
\[
\overline{u}_{k}(x_{i},y_{j})=\frac{1}{4}[u_{k} (x_{i-1},y_{j})+u_{k} (x_{i+1},y_{j})+u_{k}(x_{i},y_{j-1})  +u_{k} (x_{i},y_{j+1})].
\]
 We use  the standard  finite difference discretization for equation \eqref{15}. By setting  $\triangle x=\triangle y=h,$  we arrive at
\begin{equation}\label{17}
\begin{split}
  &   \frac{1}{h^{2}}[4 \overline{u}_{1}(x_{i},y_{j}) -4u_{1} (x_{i},y_{j})]- \frac{1}{h^{2}}[4 \overline{u}_{2}(x_{i},y_{j}) -4u_{2} (x_{i},y_{j})]\\
 & = f_{1} \chi_{\{u_{1}(x_{i},y_{j}) >0\}}-f_{2}\chi_{\{u_{2}(x_{i},y_{j}) >0\}}.
  \end{split}
\end{equation}
Therefore  we obtain   $u_{1} (x_{i},y_{j})$ and  $u_{2} (x_{i},y_{j})$    from (\ref{17}) and  impose   the following  conditions
\[
u_{1} (x_{i},y_{j}) \cdot u_{2} (x_{i},y_{j}) =0  \text{ and }    u_{1}(x_i,y_{j})\geq 0, \, u_{2}(x_i,y_j)\geq 0.
  \]
Then the  iterative method for $ u_{1}$ and $ u_{2}$ will be as  follows:\\
 $\bullet $\textbf{ Initialization:}
 \begin{equation*}
 u_{1}^{(0)}(x_{i},y_{j})=
\left \{
\begin{array}{ll}
  0    &    (x_{i},y_{j}) \in \Omega^{\circ},   \\
  \phi_{1}(x_{i},y_{j})   &   (x_{i},y_{j}) \in \partial \Omega.
\end{array}
\right.
\end{equation*}
\begin{equation*}
u_{2}^{(0)}(x_{i},y_{j})=
\left \{
\begin{array}{ll}
  0                    &   (x_{i},y_{j}) \in \Omega^{\circ},     \\
  \phi_{2}(x_{i},y_{j})  &     (x_{i},y_{j}) \in \partial \Omega,
\end{array}
\right.
\end{equation*}
where $\Omega^{\circ}$ stands for the interior points of the  domain $\Omega$.\\
$\bullet $ \textbf{{Step $k+1 $, $k\geq 0:$}}

We iterate over all interior points by setting
\begin{equation*}
\begin{cases}
u_1^{(k+1)} (x_{i},y_{j}) = \max\left(\frac{- f_1(x_i,y_j) h^2}{4}+\overline{u}_{1}^{(k)}(x_{i},y_{j})-\overline{u}_{2}^{(k)}(x_{i},y_{j}),\quad 0\right),\\

u_2^{(k+1)} (x_{i},y_{j}) =\max\left( \frac{- f_{2}(x_i,y_j) h^2}{4}+\overline{u}_{2}^{(k)}(x_{i},y_{j})-\overline{u}_{1}^{(k)}(x_{i},y_{j}), \quad 0\right).
\end{cases}
\end{equation*}

Note that if $m=1,$ then the above method can be modified. The convergence of the method in this case is given in \cite{MR737005}. Suppose there is a grid on the domain $\Omega,$  then our  method for the case of an arbitrary $m$  densities can be formulated as follows:
 \begin{itemize}
 \item \textbf{ Initialization:}
$\text{For}\quad    l=1,\cdots, m, \;\; \text{set}$
\begin{equation*}
u_{l}^{0}(x_{i},y_{j})=
\left \{
\begin{array}{ll}
  0  &    (x_{i},y_{j})   \in \Omega^{\circ},  \\
  \phi_{l}(x_{i},y_{j})  &   (x_{i},y_{j}) \in \partial \Omega.
\end{array}
\right.
\end{equation*}
\item  \textbf{{Step $k+1 $, $k\geq 0$:}}
For $l=1,\cdots, m,$ we iterate  for all  interior points
  \begin{equation}\label{I}
  u_l^{(k+1)} (x_{i},y_{j}) =\max \left(\frac{-f_{l}h^{2}}{4}+\overline{u}_{l}^{(k)}(x_{i},y_{j}) - \sum_{p \neq l}  \overline{u}_{p}^{(k)}(x_{i},y_{j}), \,  0\right).
  \end{equation}
  \end{itemize}
 \begin{remark}
Note that this iterative method is slow, since  the information  propagates  from the boundary into the domain. One interesting question  is, how can the idea of multi- grid method  be applied?
\end{remark}
\begin{lemma}\label{lemma}
The iterative method \eqref{I} satisfies
\[u_l^{(k)}(x_{i},y_{j})\cdot u_q^{(k)}(x_{i},y_{j})=0,\]
for all $k\in\mathbb{N}$ and $q,l\in\{1,2,\dots,m\},where\;\; q\neq l.$
\end{lemma}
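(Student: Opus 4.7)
The plan is to proceed by induction on the iteration index $k$, exploiting the additive structure of the scheme together with the nonnegativity enforced by the outer $\max(\cdot,\,0)$.

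For the base case $k=0$, the interior values vanish identically and the boundary values satisfy $\phi_l \cdot \phi_q = 0$ by assumption, so the disjointness holds trivially. I would also record, as a companion fact to be maintained along the induction, that $u_l^{(k)}(x_i,y_j)\ge 0$ at every grid point and every $k$; this is immediate on the boundary from $\phi_l \ge 0$ and in the interior from the $\max$ truncation, and it implies $\overline{u}_l^{(k)}\ge 0$ at every interior node.

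For the inductive step, fix an interior node $(x_i,y_j)$ and two distinct indices $l\neq q$, and argue by contradiction: suppose both $u_l^{(k+1)}(x_i,y_j)>0$ and $u_q^{(k+1)}(x_i,y_j)>0$. Then the quantities inside the two $\max$'s in \eqref{I} are strictly positive, so
\begin{equation*}
\frac{-f_l h^2}{4}+\overline{u}_l^{(k)}-\sum_{p\neq l}\overline{u}_p^{(k)}>0,\qquad
\frac{-f_q h^2}{4}+\overline{u}_q^{(k)}-\sum_{p\neq q}\overline{u}_p^{(k)}>0,
\end{equation*}
where all averages are evaluated at $(x_i,y_j)$. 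Splitting $\sum_{p\neq l}=\overline{u}_q^{(k)}+\sum_{p\neq l,q}$ and similarly for $\sum_{p\neq q}$, and adding the two inequalities, the $\overline{u}_l^{(k)}$ and $\overline{u}_q^{(k)}$ terms cancel and one is left with
\begin{equation*}
-\frac{(f_l+f_q)h^2}{4}-2\sum_{p\neq l,q}\overline{u}_p^{(k)}>0.
\end{equation*}
But the left-hand side is a sum of nonpositive terms, since $f_l,f_q\ge 0$ and $\overline{u}_p^{(k)}\ge 0$ by the companion nonnegativity property, yielding a contradiction. Hence $u_l^{(k+1)}(x_i,y_j)\cdot u_q^{(k+1)}(x_i,y_j)=0$, and the induction closes.

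I do not anticipate any real obstacle; the only subtlety is that one must explicitly carry the auxiliary nonnegativity of the iterates along the induction, since it is what forces the sum $\sum_{p\neq l,q}\overline{u}_p^{(k)}$ to have a favorable sign when the two ``positive'' inequalities are added. Boundary nodes require no argument because the disjointness of the $\phi_l$'s is hypothesized directly.
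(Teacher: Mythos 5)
Your proposal is correct and takes essentially the same approach as the paper's proof: both arguments rest solely on the nonnegativity of all iterates (forced by the outer $\max$ and the assumption $\phi_l\geq 0$) to show that the two quantities inside the $\max$ in \eqref{I} cannot be simultaneously positive. The only cosmetic difference is that you add the two inequalities and reach a contradiction, whereas the paper chains them to show directly that $u_l^{(k)}(x_i,y_j)>0$ forces the $q$-th argument to be negative and hence $u_q^{(k)}(x_i,y_j)=0$.
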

\begin{proof}
Observe that from \eqref{I} it follows that  \[u_l^{(k)}(x_{i},y_{j})\geq 0,\] for all $k\in\mathbb{N}$ and $l\in\{1,2,\dots,m\}.$ Assume $u_l^{(k)}(x_{i},y_{j})> 0$ then by \eqref{I} we have
\[u_l^{(k)} (x_{i},y_{j}) =\frac{-f_{l}h^{2}}{4}+\overline{u}_{l}^{(k-1)}(x_{i},y_{j}) - \sum_{p \neq l}  \overline{u}_{p}^{(k-1)}(x_{i},y_{j}).\]
This shows that
\[\overline{u}_{l}^{(k-1)}(x_{i},y_{j})>\sum_{p \neq l}\overline{u}_{p}^{(k-1)}(x_{i},y_{j})+\frac{f_{l}h^{2}}{4}\geq\overline{u}_{q}^{(k-1)}(x_{i},y_{j}).\]
Thus
\[\overline{u}_{q}^{(k-1)}(x_{i},y_{j})<\overline{u}_{l}^{(k-1)}(x_{i},y_{j})\leq\frac{f_{q}h^{2}}{4}+\sum_{p \neq q}  \overline{u}_{p}^{(k-1)}(x_{i},y_{j}),\]
and after rearranging above inequalities we arrive at
\begin{equation}\label{ineq_lemma}
\frac{-f_{q}h^{2}}{4}+\overline{u}_{q}^{(k-1)}(x_{i},y_{j})-\sum_{p \neq q}  \overline{u}_{p}^{(k-1)}(x_{i},y_{j})<0.
\end{equation}
In light of \eqref{I}  and \eqref{ineq_lemma} we derive
\[u_q^{(k)} (x_{i},y_{j}) =\max \left(\frac{-f_{q}h^{2}}{4}+\overline{u}_{q}^{(k-1)}(x_{i},y_{j}) - \sum_{p \neq q}  \overline{u}_{q}^{(k-1)}(x_{i},y_{j}), \,  0\right)=0.\]
Thus
\[u_l^{(k)}(x_{i},y_{j})\cdot u_q^{(k)}(x_{i},y_{j})=0.\]
\end{proof}

In order to see the consistency of the method to the problem \eqref{1}, we will consider the finite difference scheme of our method \eqref{I}. The scheme apparently will be the following discrete nonlinear system :
\begin{equation}\label{II}
\small
\begin{cases}
u_l (x_{i},y_{j}) =\max \left(\frac{-f_{l}h^{2}}{4}+\overline{u}_{l}(x_{i},y_{j}) - \sum_{p \neq l} \overline{u}_{p}(x_{i},y_{j}),\,0\right)&(x_{i},y_{j})\in \Omega^{\circ},\\
u_{l}(x_{i},y_{j})=\phi_{l}(x_{i},y_{j})&(x_{i},y_{j}) \in \partial \Omega,
\end{cases}
\end{equation}
where $l\in\{1,2,\dots,m\}.$

We want to show the consistency of the scheme \eqref{II} to the discussed properties in Corollary \ref{9}.
First of all the disjoint property of the components follows directly from  Lemma \ref{lemma}. Suppose  $u_{l}(x_{i},y_{j})>0,$ and together with this $\overline{u}_{p}(x_{i},y_{j})=0$, for all $p\neq l.$ This will imply that \[u_l (x_{i},y_{j})=\frac{-f_{l}h^{2}}{4}+\overline{u}_{l}(x_{i},y_{j}) - \sum_{p \neq l} \overline{u}_{p}(x_{i},y_{j})=\frac{-f_{l}h^{2}}{4}+\overline{u}_{l}(x_{i},y_{j}),\]
and hence
\begin{equation}\label{discret_laplace}
\frac{1}{h^2}(4\overline{u}_{l}(x_{i},y_{j})-4u_{l} (x_{i},y_{j}))=f_l(x_{i},y_{j}).
\end{equation}
But  equation \eqref{discret_laplace} is just a discrete scheme of the Poisson equation
\[\Delta u_l=f_l.\]
Hence, if in the  discrete sense $u_l(x,y)>0,$ then we have $\Delta u_l=f_l.$
If we are locally on the free boundary of two components, say $u_l$ and $u_q,$ then in the scheme \eqref{II} we have the following situation:
\[u_l(x_i,y_j)=u_q(x_i,y_j)=\overline{u}_{p}(x_{i},y_{j})=0,\;\;\text{where}\;\; p\neq l\;\; \text{and} \;\; p\neq q.\]
According to the scheme \eqref{II} we have
\[0=\max\left(\overline{u}_{l}(x_{i},y_{j})-\overline{u}_{q}(x_{i},y_{j})-\frac{f_{l}h^{2}}{4},0 \right),\]
and
\[0=\max\left(\overline{u}_{q}(x_{i},y_{j})-\overline{u}_{l}(x_{i},y_{j})-\frac{f_{q}h^{2}}{4},0 \right).\]
Therefore
\[-f_{q}(x_{i},y_{j})\leq\frac{4}{h^2}(\overline{u}_{l}(x_{i},y_{j})-\overline{u}_{q}(x_{i},y_{j}))\leq f_{l}(x_{i},y_{j}),\]
and taking into account $u_l(x_i,y_j)=u_q(x_i,y_j)=0,$ we obtain
\[-f_q\leq\Delta_h(u_l-u_q)\leq f_l,\]
at  $(x_{i},y_{j}).$
Combining all results we see the consistency with Corollary \ref{9}.

Here we give a proof of the convergence of our method to the discretized problem, in the  case $m=2$ and $f_{i}=0.$ We consider the following non-linear finite difference method
\begin{equation}\label{f}
\begin{cases}
u_{1}^{k+1}(x_{i},y_{j})=\max(\overline{u}_{1}^k-\overline{u}_{2}^k,0),\\
u_{2}^{k+1}(x_{i},y_{j})=\max(\overline{u}_{2}^k-\overline{u}_{1}^k,0).
\end {cases}
\end{equation}
%For this system it is obvious that  $u_{1}^{k}(x_{i},y_{j})\cdot u_{2}^{k}(x_{i},y_{j})=0$ for every $k\geq 1$.
Note that (\ref{f}) can be written as:
\begin{equation}
\begin{cases}
u_{1}^{k+1}(x_{i},y_{j})=\max(\overline{u}_{1}^k-\overline{u}_{2}^k,0)=\frac{1}{2}\left(\overline{u}_{1}^k-\overline{u}_{2}^k+|\overline{u}_{1}^k-\overline{u}_{2}^k|\right),\\
u_{2}^{k+1}(x_{i},y_{j})=\max(\overline{u}_{2}^k-\overline{u}_{1}^k,0)=\frac{1}{2}\left(\overline{u}_{2}^k-\overline{u}_{1}^k+|\overline{u}_{2}^k-\overline{u}_{1}^k|\right).
\end {cases}
\end{equation}
By subtracting the first equation from the second, we obtain
\begin{equation}\label{lap_u1-u2}
u_{1}^{k+1}(x_{i},y_{j})-u_{2}^{k+1}(x_{i},y_{j})=\overline{u}_{1}^k-\overline{u}_{2}^k,
\end{equation}
which is  a classical finite difference scheme of $\Delta(u_{1}(x)-u_{2}(x))=0$. It is noteworthy that \eqref{lap_u1-u2} follows from the last part in Corollary \ref{9}. This gives that  we have convergence of
\begin{equation}\label{ff}
u_{1}^{k}(x_{i},y_{j})-u_{2}^{k}(x_{i},y_{j})
\end{equation}
 at every point $(x_{i},y_{j})$,  when $k\rightarrow\infty$.  Recalling that by Lemma \ref{lemma}
   \[u_{1}^{k}(x_{i},y_{j})\cdot u_{2}^{k}(x_{i},y_{j})=0,\]
 for  every $k>0,$   we can write  the following identity for all $k$,
\begin{equation}\label{sag}
(u_{1}^{k}(x_{i},y_{j})-u_{2}^{k}(x_{i},y_{j}))^2=(u_{1}^{k}(x_{i},y_{j})+u_{2}^{k}(x_{i},y_{j}))^2.
\end{equation}
  Therefore  convergence of  $u_{1}^{k}(x_{i},y_{j})-u_{2}^{k}(x_{i},y_{j})$ at every point $(x_{i},y_{j})$ will imply the convergence of
\[(u_{1}^{k}(x_{i},y_{j})-u_{2}^{k}(x_{i},y_{j}))^2,\]
 at every point as well. Hence, by  \eqref{sag} the sequence
   \begin{equation*}
(u_{1}^{k}(x_{i}, y_{j})+u_{2}^{k}(x_{i},y_{j}))^2
\end{equation*}
 converges at every point $(x_{i}, y_{j})$. Note that  $u_{1}^{k}(x_{i},y_{j})$ and $u_{2}^{k}(x_{i},y_{j})$ are positive, which implies the convergence of
\[
u_{1}^{k}(x_{i},y_{j})+u_{2}^{k}(x_{i},y_{j}).
\]
  Finally, convergence of $u_{1}^{k}(x_{i},y_{j})-u_{2}^{k}(x_{i},y_{j})$ and $u_{1}^{k}(x_{i},y_{j})+u_{2}^{k}(x_{i},y_{j})$   will imply the convergence of $u_{1}^{k}(x_{i},y_{j})$ and $u_{2}^{k}(x_{i},y_{j})$ at every nodal point $(x_{i},y_{j})$.  This completes the proof.

\section{ Theoretical results of Problem (B)}
\renewcommand{\theequation}{3.\arabic{equation}}
\setcounter{equation}{0}

In this section we present results that have been proved  for Problem (B), for the case of two-species in dimension two.
%\begin{equation*}
%\left \{
%\begin{array}{lll}
%-\Delta u_{i} = \lambda u_{i}(1 - u_{i})-\frac{1}{\varepsilon} u_{i} \sum_{j\neq i} u^{2}_{j}   & \text{ in } \Omega,\\
 %   u_{i}(x, y) = \phi_{i}(x, y),& \text{ on } \partial \Omega,\\
 %  i=1\cdots m.
%\end{array}
%\right.
%\end{equation*}

Consider the following system:
\begin{equation}\label{19}
\left \{
\begin{array}{llllll}
 u_{t}-d_{1} \Delta u = \lambda u(1 - u)-\frac{1}{\varepsilon} uv^{2}   & \text{ in } \Omega \times (0, \infty ),\\
  v_{t}- d_{2} \Delta v = \lambda v(1 - v)-\frac{1}{\varepsilon} u^{2}v  & \text{ in } \Omega \times (0, \infty ),\\
  u(x, y, t) = \phi(x, y, t)   & \text{ on } \partial \Omega \times (0, \infty ),\\
  v(x, y, t) = \psi(x, y, t)   & \text{ on } \partial \Omega \times (0, \infty ),\\
   u(x, y, 0) = u_{0}(x, y)    & \text{ in } \Omega,\\
   v(x, y, 0) = v_{0}(x, y)    & \text{ in } \Omega.\\
\end{array}
\right.
\end{equation}
This problem has been studied in \cite{MR2363653, MR1293103, MR2459673,MR2417905}, where the references of some physical background  involving  cubic coupling  is given. The system \eqref{19},  for steady boundary data  admits a Lyapunov energy.
Assume that the initial conditions $ u_{0}(x, y)$ and $v_{0}(x, y)$   have disjoint  supports   and
 \[
   0 \leq u_{0}(x, y), v_{0}(x, y)\leq 1.
 \]
 We  also assume that the boundary conditions are positive with disjoint support.  The following Theorem has been proved in \cite{MR2417905}.
\begin{theorem}\label{25}
There exist two functions $u(x, y), v(x, y) \in H^{1}(\Omega)\cap L^{\infty}(\Omega)$ such that
\[
(u_{\varepsilon_{m}}(t_{m}), v_{\varepsilon_{m}}(t_{m}))\rightarrow (u, v) \text{ in  } L^{p}(\Omega)\times L^{p}(\Omega) \text{ for any } p\geq 2,
\]
as $ \varepsilon \rightarrow 0 $ and $ t\rightarrow \infty,$ where $ 0 \leq  u, v \leq1$ and $ u \cdot v =0$ in $\Omega.$ Moreover,
\[
-d_{1} \Delta u \leq \lambda u(1-u), \quad   -d_{2} \Delta v \leq \lambda v(1-v),
\]
and  $u\lfloor_{\partial \Omega}=\phi$, $v \lfloor_{\partial \Omega}=\psi.$
\end{theorem}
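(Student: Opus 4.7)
The plan is to combine a uniform $L^\infty$ bound from the maximum principle with the Lyapunov energy advertised after \eqref{19}, extract compactness along a diagonal sequence $(\varepsilon_m, t_m)$, and pass to the limit in the natural differential inequalities.

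First I would check that $0 \leq u_\varepsilon(t), v_\varepsilon(t) \leq 1$ uniformly in $\varepsilon$ and $t$. The hypotheses place the initial data and the boundary data in $[0,1]$; moreover $u \equiv 1$ is a supersolution of the $u$-equation in \eqref{19} because $\lambda u(1-u) - \varepsilon^{-1} u v^2 \leq 0$ at $u=1$ whenever $v \geq 0$, while $u \equiv 0$ is a subsolution, with the analogous statements for $v$. A standard parabolic comparison argument then yields the claim. Next, the system \eqref{19} is the gradient flow of the Lyapunov functional
\[
\mathcal{E}_\varepsilon(u,v) = \int_\Omega \Bigl[\tfrac{d_1}{2}|\nabla u|^2 + \tfrac{d_2}{2}|\nabla v|^2 - \lambda F(u) - \lambda F(v) + \tfrac{1}{2\varepsilon}u^2 v^2\Bigr]dx,
\]
where $F(s) = s^2/2 - s^3/3$. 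For time-independent boundary values the dissipation identity
\[
\mathcal{E}_\varepsilon(u_\varepsilon(T),v_\varepsilon(T)) + \int_0^T\!\int_\Omega(|\partial_t u_\varepsilon|^2 + |\partial_t v_\varepsilon|^2)\,dxdt = \mathcal{E}_\varepsilon(u_0,v_0)
\]
holds, and because $u_0 v_0 \equiv 0$ the initial energy is bounded independently of $\varepsilon$. Together with the $L^\infty$ bound this yields
\[
\sup_{t\geq 0}\bigl(\|u_\varepsilon(t)\|_{H^1} + \|v_\varepsilon(t)\|_{H^1}\bigr) + \sup_{t\geq 0}\frac{1}{\varepsilon}\int_\Omega u_\varepsilon^2 v_\varepsilon^2\,dx + \int_0^\infty\!\!\int_\Omega(|\partial_t u_\varepsilon|^2+|\partial_t v_\varepsilon|^2)\,dxdt \leq C.
\]

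From the last estimate I can pick, for each $\varepsilon$, a time $t(\varepsilon)\to\infty$ along which $\|\partial_t u_\varepsilon(t(\varepsilon))\|_{L^2} + \|\partial_t v_\varepsilon(t(\varepsilon))\|_{L^2} \to 0$. Writing $(U_m, V_m) := (u_{\varepsilon_m}(t_m), v_{\varepsilon_m}(t_m))$ along a diagonal sequence, the Rellich-Kondrachov theorem provides, up to subsequence, $U_m \rightharpoonup u$ weakly in $H^1$ and strongly in $L^p(\Omega)$ for every $p \geq 2$ (interpolating with the uniform $L^\infty$ bound), and similarly $V_m \to v$; traces deliver the boundary values. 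The interaction bound forces $\int_\Omega U_m^2 V_m^2 \leq C\varepsilon_m \to 0$, so $uv = 0$ a.e., while $0 \leq u,v \leq 1$ is preserved under $L^p$ convergence.

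To obtain the differential inequalities I would test the $u$-equation against $\varphi \in C^\infty_c(\Omega)$ with $\varphi \geq 0$; since $\varepsilon^{-1} U_m V_m^2 \varphi \geq 0$,
\[
\int_\Omega d_1 \nabla U_m \cdot \nabla \varphi\,dx - \int_\Omega \lambda U_m(1-U_m)\varphi\,dx \leq -\int_\Omega \partial_t u_{\varepsilon_m}(t_m)\,\varphi\,dx.
\]
The right-hand side vanishes by the choice of $t_m$, whereas weak $H^1$ plus strong $L^p$ convergence passes the left-hand side to the limit, giving $-d_1 \Delta u \leq \lambda u(1-u)$ in $\mathcal{D}'(\Omega)$, with the $v$-inequality being symmetric.

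The main obstacle I expect is coordinating the two limits $\varepsilon \to 0$ and $t \to \infty$. The Lyapunov dissipation is an integral in time, so producing a single $t(\varepsilon)$ for which the instantaneous time derivative is small while the spatial compactness still functions requires a balanced schedule (for example, selecting $t(\varepsilon) \in [T_\varepsilon, 2T_\varepsilon]$ as a minimiser of $\|\partial_t u_\varepsilon(s)\|_{L^2}^2 + \|\partial_t v_\varepsilon(s)\|_{L^2}^2$ and letting $T_\varepsilon \to \infty$ slowly enough relative to $\varepsilon_m \to 0$). Beyond this scheduling issue, the remainder is a standard penalty/segregation limit.
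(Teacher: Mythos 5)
The paper does not actually prove Theorem \ref{25}: it is quoted from the reference \cite{MR2417905} and used as a black box to justify the elliptic inequality on which the numerical scheme for Problem (B) is built, so there is no internal proof to compare yours against. That said, your outline is the standard argument for this kind of singular-limit statement and I see no essential gap: the invariant-region bound $0\le u_\varepsilon,v_\varepsilon\le 1$, the gradient-flow structure with the Lyapunov functional containing $\tfrac{1}{2\varepsilon}u^2v^2$ (consistent with the paper's remark that \eqref{19} admits a Lyapunov energy for steady boundary data), the selection of $t_m\in[T_m,2T_m]$ with small instantaneous dissipation from the time-integrated dissipation bound, Rellich compactness interpolated against the $L^\infty$ bound to get $L^p$ convergence for all finite $p\ge 2$, the bound $\int_\Omega u_{\varepsilon}^2v_{\varepsilon}^2\,dx\le C\varepsilon$ forcing $uv=0$, and the one-sided passage to the limit exploiting the sign of $\varepsilon^{-1}u_\varepsilon v_\varepsilon^2\varphi$ for $\varphi\ge 0$ are all sound. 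Two points you should make explicit rather than leave implicit: first, the initial energy $\mathcal{E}_\varepsilon(u_0,v_0)$ is bounded independently of $\varepsilon$ only because $u_0v_0\equiv 0$ \emph{and} $u_0,v_0\in H^1(\Omega)$ --- the latter is not among the hypotheses stated in the paper (which only assumes $0\le u_0,v_0\le 1$ with disjoint supports) and must be added; second, the dissipation identity requires the boundary data to be genuinely time-independent so that $\partial_t u_\varepsilon$ vanishes on $\partial\Omega$, which is the intended setting of Problem (B) even though \eqref{19} formally writes $\phi(x,y,t)$.
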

Next, consider the following system:
\begin{equation}\label{26}
\left \{
\begin{array}{llllll}
 u_{t}- d_{1} \Delta u = \lambda u(1 - u)-\frac{1}{\varepsilon} uv   & \text{ in } \Omega \times (0, \infty ),\\
  v_{t}- d_{2} \Delta v = \lambda v(1 - v)-\frac{1}{\varepsilon} uv  & \text{ in } \Omega \times (0, \infty ),\\
  u(x, y, t) = \phi(x, y, t)  & \text{ on } \partial \Omega \times (0, \infty ),\\
  v(x, y, t) = \psi(x, y, t) & \text{ on } \partial \Omega \times (0, \infty ),\\
   u(x, y, 0) = u_{0}(x, y)  & \text{ in } \Omega,\\
   v(x, y, 0) = v_{0}(x, y)  & \text{ in } \Omega.\\
\end{array}
\right.
\end{equation}

It has been shown in \cite{MR2079274}    that for any $ T> 0$ as $\varepsilon$ tends to zero there exists a sequence of solutions
$(u_{\varepsilon},v_{\varepsilon})$   to the system  (\ref{26})    converging  in $ L^{2}(\Omega \times (0, T))$   to a bounded segregated state $(u, v),$ such that $w = u - v$  solves the  limiting free boundary problem \eqref{30}, which  shows the spatial segregation phenomena  on finite time intervals.
 \begin{theorem}\label{28}\cite{MR2079274}
 Let $T>0$. Then there exists a sequence $\varepsilon _{m}$  and $u, v \in L^{\infty}$ with
 \[
 ( u_{\varepsilon_{m}}, v_{\varepsilon_{m}}) \rightarrow (u, v) \quad \text{ in } L^{2}(\Omega \times (0, T)) \times  L^{2}(\Omega \times (0, T)),
 \]
 as $\varepsilon\rightarrow 0$, where $ 0 \leq u, v\leq 1$ and $ u\cdot v=0$  in $\Omega.$ Moreover, $w=u-v$ is the unique  weak solution to the following free boundary problem:
 \begin{equation}\label{30}
\left \{
\begin{array}{lll}
 w_{t}-\Delta  D (w) = \lambda w(1 - |w|)   & \text{ in } \Omega \times (0, \infty ),\\
  D(w(x, y, t)) = d_{1}\phi(x, y, t)-d_{2}\psi(x, y, t)  & \text{ on } \partial \Omega \times (0, \infty ),\\
  w(x, y, 0) = u_{0}(x, y)-v_{0}(x, y)  & \text{ in } \Omega,\\
   \end{array}
\right.
\end{equation}
 where
 \begin{equation}\label{32}
 D(\sigma)=
\left \{
\begin{array}{ll}
 d_{1} \sigma   & \sigma\geq 0,\\
  d_{2} \sigma   & \sigma  < 0.
   \end{array}
\right.
\end{equation}
 \end{theorem}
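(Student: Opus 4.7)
The plan is to obtain $(u,v)$ as a subsequential limit of $(u_\varepsilon,v_\varepsilon)$ and then identify the PDE satisfied by $w=u-v$ by passing to the limit in the difference of the two equations of \eqref{26}. I would proceed in four stages.

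First I would establish uniform a priori estimates. Multiplying each equation of \eqref{26} by a suitable truncation and applying the maximum principle (together with the assumption $0\le u_0,v_0\le 1$ and the analogous bounds on the boundary data $\phi,\psi$), I would show $0\le u_\varepsilon,v_\varepsilon\le 1$ uniformly in $\varepsilon$ on $\Omega\times(0,T)$. Testing the equations against $u_\varepsilon$ and $v_\varepsilon$ respectively and integrating in space-time produces uniform $L^2(0,T;H^1(\Omega))$ bounds for $u_\varepsilon$ and $v_\varepsilon$, plus a crucial control
\[
\frac{1}{\varepsilon}\int_0^T\!\!\int_\Omega u_\varepsilon v_\varepsilon\,dx\,dt \le C,
\]
which is the quantitative seed of the segregation property. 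Adding the two equations after multiplying by $d_2$ and $d_1$ respectively (to eliminate the singular term) gives an estimate on the time derivative of $d_2 u_\varepsilon+d_1 v_\varepsilon$ in $L^2(0,T;H^{-1}(\Omega))$, and similarly subtracting gives one on $(u_\varepsilon-v_\varepsilon)_t$.

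Next I would extract compactness. The uniform $H^1$-in-space bound, combined with the $H^{-1}$-in-time bound on the appropriate linear combinations, lets me invoke an Aubin--Lions argument to pass to a subsequence $\varepsilon_m\downarrow 0$ with $(u_{\varepsilon_m},v_{\varepsilon_m})\to(u,v)$ strongly in $L^2(\Omega\times(0,T))^2$ and a.e.\ pointwise. The $L^1$ bound on $\varepsilon^{-1}u_\varepsilon v_\varepsilon$ then forces $uv=0$ almost everywhere, so $(u,v)$ is indeed segregated; the uniform $L^\infty$ bound preserves $0\le u,v\le 1$.

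Third, I would identify the limiting equation. Subtracting the two PDEs of \eqref{26} gives
\[
(u_\varepsilon-v_\varepsilon)_t-\Delta\bigl(d_1 u_\varepsilon-d_2 v_\varepsilon\bigr)=\lambda u_\varepsilon(1-u_\varepsilon)-\lambda v_\varepsilon(1-v_\varepsilon),
\]
in which the singular coupling has cancelled exactly; this is the payoff of working with $w_\varepsilon:=u_\varepsilon-v_\varepsilon$. In the limit, using $uv=0$ one checks pointwise that $d_1 u-d_2 v=D(u-v)=D(w)$ with $D$ as in \eqref{32}, and that
\[
\lambda u(1-u)-\lambda v(1-v)=\lambda(u-v)-\lambda(u^2-v^2)=\lambda w\bigl(1-(u+v)\bigr)=\lambda w(1-|w|),
\]
again using $u+v=|u-v|=|w|$ on the disjoint support. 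Strong $L^2$ convergence is sufficient to pass to the limit in all nonlinear terms, yielding \eqref{30} in the sense of distributions; the boundary and initial conditions follow from the corresponding convergences of $u_\varepsilon-v_\varepsilon$ in traces.

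Finally I would address uniqueness of the limiting free boundary problem, which is the part I expect to be the main obstacle, since it fixes the full sequence rather than just a subsequence. The natural approach is to work in the Stefan-type formulation of \eqref{30} and subtract two weak solutions $w^1,w^2$: testing with a regularisation of $\operatorname{sgn}(D(w^1)-D(w^2))$ and exploiting monotonicity of $D$ together with the Lipschitz character of $w\mapsto \lambda w(1-|w|)$ gives a Gronwall-type contraction in $L^1(\Omega)$, forcing $w^1\equiv w^2$. Once uniqueness is in hand the whole family $u_\varepsilon-v_\varepsilon$ converges, completing the proof.
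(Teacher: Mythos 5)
The paper does not prove this theorem at all: it is quoted as a known result from \cite{MR2079274}, so there is no internal proof to compare against. Your outline reconstructs essentially the argument of that reference: invariance of $[0,1]^2$, energy estimates, the uniform bound on $\varepsilon^{-1}\|u_\varepsilon v_\varepsilon\|_{L^1(\Omega\times(0,T))}$, compactness, exact cancellation of the singular coupling in the difference $w_\varepsilon=u_\varepsilon-v_\varepsilon$, the pointwise identities $d_1u-d_2v=D(w)$ and $\lambda u(1-u)-\lambda v(1-v)=\lambda w(1-|w|)$ on segregated states, and an $L^1$-contraction/monotonicity argument for uniqueness of the limiting Stefan-type problem. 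All of these steps are the right ones.

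One concrete error in your compactness stage: you claim that adding the two equations weighted by $d_2$ and $d_1$ ``eliminates the singular term.'' It does not. Since the coupling $\tfrac{1}{\varepsilon}u_\varepsilon v_\varepsilon$ enters \emph{both} equations with coefficient $1$, only the plain difference cancels it; the combination $d_2u_\varepsilon+d_1v_\varepsilon$ satisfies an equation carrying $-\tfrac{d_1+d_2}{\varepsilon}u_\varepsilon v_\varepsilon$ on the right, so the claimed $L^2(0,T;H^{-1})$ bound on $\partial_t(d_2u_\varepsilon+d_1v_\varepsilon)$, and with it Aubin--Lions for the \emph{pair} $(u_\varepsilon,v_\varepsilon)$, is not established as written. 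The standard repair: apply Aubin--Lions only to $w_\varepsilon$, whose time derivative genuinely is bounded in $L^2(0,T;H^{-1})$, to get $w_\varepsilon\to w$ strongly in $L^2$; then note that $u_\varepsilon-w_\varepsilon^{+}=\min(u_\varepsilon,v_\varepsilon)$ and $\min(u_\varepsilon,v_\varepsilon)^2\le u_\varepsilon v_\varepsilon$, so the bound $\varepsilon^{-1}\|u_\varepsilon v_\varepsilon\|_{L^1}\le C$ forces $u_\varepsilon\to w^{+}$ and $v_\varepsilon\to w^{-}$ strongly in $L^2$, which is exactly the strong convergence and segregation you need. (Alternatively, the uniform $L^1$ bound on $\varepsilon^{-1}u_\varepsilon v_\varepsilon$ places $\partial_t(d_2u_\varepsilon+d_1v_\varepsilon)$ in $L^1(0,T;W^{-1,q}(\Omega))$ for suitable $q$ and a Simon-type lemma applies, but the justification you gave is wrong.) A second, minor, point: with inhomogeneous Dirichlet data the energy estimates must be run against $u_\varepsilon-\Phi$ for an extension $\Phi$ of the boundary data, and the derivation of the $\varepsilon^{-1}\|u_\varepsilon v_\varepsilon\|_{L^1}$ bound by integrating the equation requires control of the boundary flux; this is handled in the reference but glossed over in your sketch. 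With these corrections the proposal is a faithful account of the cited proof.
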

% (see problem (3.1)), which determines analytically the segregation
% boundary between the two species in the limit $\varepsilon $  tends to zero.
The  cases of time-dependent boundary conditions and possibly different diffusion coefficients has been discussed in \cite{MR2079274}. In the case of equal diffusion coefficients $d_1 = d_2 $ and stationary boundary conditions, Crooks, Dancer and Hilhorst  studied the long-term segregation for large interactions (see \cite{MR2300320}). They reduced  the system to a single parabolic equation, whose solution have $\varepsilon$-independent uniform bounds. This system  does not admit a natural Lyapunov functional and therefore a direct analysis is not possible  for long term behavior.

\subsection{  Numerical approximation of Problem (B) }

We present a numerical scheme for elliptic system in Problem (B) as $\varepsilon \rightarrow 0$. To explain the method,  we assume that there exist  two components. The Theorem \ref{25} states that
\[
d_{2} \Delta v  -d_{1} \Delta u =\lambda u(1-u)\chi_{\{u > 0\}} - \lambda v(1-v)\chi_{\{v > 0\}}.
\]
This equation  is  solved numerically by employing second order, centered, finite
difference scheme on the given  grid  i.e,
\begin{equation}\label{34}
    -\frac{d_{1}}{h^{2}}[4 \overline{u}(x_{i},y_{j}) -4u (x_{i},y_{j})]+\frac{d_{2}}{h^{2}}[4 \overline{v}(x_{i},y_{j}) -4v (x_{i},y_{j})]=
  \end{equation}
\begin{equation*}
 \lambda u(x_{i},y_{j})(1-u(x_{i},y_{j}))\chi_{\{u(x_{i},y_{j})   > 0\}} - \lambda v (x_{i},y_{j})(1-v (x_{i},y_{j}))\chi_{\{v(x_{i},y_{j})   > 0\}}.
\end{equation*}
It is easy to see that the equation  (\ref{34}) is a   quadratic equation with respect to   $u (x_{i},y_{j})$ and  $v (x_{i},y_{j}).$  Using the same  approach as in Section 2.2, if   $u (x_{i},y_{j})>0,$ then we set $v (x_{i},y_{j})=0$ and vice versa. Set $ 4\alpha=\lambda h^{2},$ then from  equation (\ref{34}) we  have the following iterative formulas:
{\normalsize
\[
u^{(k+1)} (x_{i},y_{j})=\max \left(\frac{2( d_{1}\overline{u}^{(k)}(x_{i},y_{j})-d_{2}\overline{v}^{(k)}(x_{i},y_{j}))}{d_{1}-\alpha +\sqrt{(d_{1}-\alpha)^{2}+4\alpha(d_{1}\overline{u}^{(k)}(x_{i},y_{j})-d_{2}\overline{v}^{(k)}(x_{i},y_{j}))}},0 \right),
\]}
and
{\normalsize
\[
 v^{(k+1)}(x_{i},y_{j})=\max \left(\frac{2(d_{2}\overline{v}^{(k)}(x_{i},y_{j})-d_{1}\overline{u}^{(k)}(x_{i},y_{j}))}{d_{2}-\alpha +\sqrt{(d_{2}-\alpha)^{2}+4\alpha(d_{2}\overline{v}^{(k)}(x_{i},y_{j})-d_{1}\overline{u}^{(k)}(x_{i},y_{j}))}},0 \right).
\]
}

This approach can be extended for $m$ components as well. The idea is just we take the difference between the i-th equation of the system and the sum of all other equations. After that we use the same disjointness approach, by setting  $u_i(x_s,y_r)>0$ and $u_j(x_s,y_r)=0$ for all $i\neq j,$ on the grid point $(x_s,y_r).$ This will lead us to the quadratic equation  w.r.t $u_i(x_s,y_r)$ as above.
Thus according to the same arguments as above for $m$ components we obtain the following iterative method:
For all $l = 1,\dots,m,$
\begin{equation}\label{iterate_method}
u_l^{(k+1)}(x_{i},y_{j})=\max \left(\frac{ 2\overline{w_l}^{(k)}(x_{i},y_{j}) }{d_{l}-\alpha +\sqrt{(d_{l}-\alpha)^{2}+4\alpha\overline{w_l}^{(k)}(x_{i},y_{j})}},0 \right),
\end{equation}
where
\[\overline{w_l}^{(k)}(x_{i},y_{j})=d_{l}\overline{u_l}^{(k)}(x_{i},y_{j})-\sum _{p\neq l}d_{p}\overline{u_p}^{(k)}(x_{i},y_{j}).\]
Again using the same approach as in Lemma \ref{lemma}, one can prove the same result for this method as well.
\begin{lemma}\label{lemma_2}
If$\,\,\,\underset{l}{\min }\,d_l>\alpha,$ then the iterative method \eqref{iterate_method} satisfies
\[u_l^{(k)}(x_{i},y_{j})\cdot u_q^{(k)}(x_{i},y_{j})=0,\]
for all $k\in\mathbb{N}$ and $q,l\in\{1,2,\dots,m\},where\;\; q\neq l.$
\end{lemma}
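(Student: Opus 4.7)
The plan is to copy the strategy of Lemma \ref{lemma} almost verbatim, with the quantity $\overline{u}_l^{(k-1)} - \sum_{p\neq l}\overline{u}_p^{(k-1)} - f_l h^2/4$ that appeared there replaced throughout by $\overline{w}_l^{(k-1)}$. The first step is to reinterpret the iterate \eqref{iterate_method} as a sign-preserving operation: under the hypothesis $d_l-\alpha>0$, I would check that the fraction inside the outer max has the same sign as its numerator $2\overline{w}_l^{(k-1)}$. When $\overline{w}_l^{(k-1)}\ge 0$ the discriminant $(d_l-\alpha)^2+4\alpha\overline{w}_l^{(k-1)}$ is nonnegative and the denominator $d_l-\alpha+\sqrt{(d_l-\alpha)^2+4\alpha\overline{w}_l^{(k-1)}}$ is strictly positive; when $\overline{w}_l^{(k-1)}$ is negative but the discriminant remains nonnegative, the square root stays strictly below $d_l-\alpha$, so the denominator is still positive while the numerator is nonpositive; and when the discriminant is negative one interprets the clipped iterate as $0$. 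The upshot is the equivalence $u_l^{(k)}(x_i,y_j)>0\Longleftrightarrow \overline{w}_l^{(k-1)}(x_i,y_j)>0$.

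The second step is the disjointness argument itself. Fix indices $l\neq q$ and suppose toward a contradiction that $u_l^{(k)}(x_i,y_j)>0$ and $u_q^{(k)}(x_i,y_j)>0$. By induction $u_p^{(k-1)}\ge 0$ for every $p$ (the outer max clips below at $0$), hence the neighbour averages $\overline{u}_p^{(k-1)}$ are also nonnegative. By Step 1 the hypothesis on $u_l^{(k)}$ unpacks to
\[
d_l\,\overline{u}_l^{(k-1)}(x_i,y_j)>\sum_{p\neq l}d_p\,\overline{u}_p^{(k-1)}(x_i,y_j)\ge d_q\,\overline{u}_q^{(k-1)}(x_i,y_j).
\]
Rewriting
\[
\overline{w}_q^{(k-1)} = \bigl(d_q\overline{u}_q^{(k-1)}-d_l\overline{u}_l^{(k-1)}\bigr)-\sum_{\substack{p\neq l\\ p\neq q}}d_p\overline{u}_p^{(k-1)},
\]
the parenthesized term is strictly negative by the display above and the remaining sum is nonnegative; therefore $\overline{w}_q^{(k-1)}<0$, and the contrapositive of Step 1 forces $u_q^{(k)}(x_i,y_j)=0$, contradicting the assumption.

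The only subtle point is the sign analysis in Step 1, which is where the hypothesis $\min_l d_l>\alpha$ enters decisively: without $d_l-\alpha>0$ the denominator in \eqref{iterate_method} need not be positive and the sign equivalence between the iterate and $\overline{w}_l^{(k-1)}$ breaks down. Once that equivalence is secured, the rest is inductive bookkeeping, the base case $k=0$ being immediate from the zero initialization in the interior together with the disjointness of the boundary data $\phi_l$.
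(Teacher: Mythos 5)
Your proof is correct and follows exactly the route the paper intends: it adapts the argument of Lemma \ref{lemma} by replacing the linear quantity there with $\overline{w}_l^{(k-1)}$, and your Step 1 supplies the one genuinely new ingredient, namely that under $\min_l d_l>\alpha$ the denominator in \eqref{iterate_method} stays positive so the iterate has the sign of $\overline{w}_l^{(k-1)}$. The paper itself omits these details, merely asserting that ``the same approach as in Lemma \ref{lemma}'' works, so your write-up is a faithful (and slightly more careful) completion of the intended proof.
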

\subsection{ Parabolic case  }
 In the case, when coupling term is $\frac{uv}{\varepsilon},$ the Theorem \ref{14} states  that $w = u - v$  solves the  limiting
free boundary problem in Theorem  \ref{30}, which  shows the spatial segregation phenomena  on finite time intervals. In order to solve the problem \eqref{34} the second-order, implicit, Crank-Nicolson method is applied.
   %for time discretization in order to avoid possible numerical-stability problems associated with explicit methods and to allow relatively large %time steps.

\begin{equation}\label{36}
\begin{split}
& \frac{w^{n+1}(x_{i},y_{j})-w^{n}(x_{i},y_{j})}{dt} - \frac{1}{2} (\Delta D w|^{n+1}_{(x_{i},y_{j})}+ \Delta D w|^{n}_{(x_{i},y_{j})} ) \\
&    = \frac{\lambda}{2}  [w^{n+1}(1-w^{n+1})+w^{n}(1-w^{n})].
  \end{split}
\end{equation}

 In this case  we can obtain an  iterative formula for $w^{n+1}(x_{i},y_{j})$  as a function of  $w^{n}(x_{i},y_{j}),$ $\overline{w}^{n}(x_{i},y_{j})$ and  $\overline{w}^{n+1}(x_{i},y_{j}).$

\section{  Numerical Examples }
\renewcommand{\theequation}{4.\arabic{equation}}
\setcounter{equation}{0}

In this section we present  different examples  of Problem  (A) and Problem (B).    We consider the following minimization problem
\begin{equation}\label{last}
I=\int_{\Omega}  \sum_{i=1}^{m} \left( \frac{1}{2}| \nabla u_{i}|^{2}+f_{i}u_{i}  \right) dx,
\end{equation}
  over the set $S={\{(u_1,\dots,u_{m})\in (H^{1}   (\Omega))^{m} :u_{i}\geq0, u_{i} \cdot u_{j}=0, u_{i}=\phi_{i} \quad \text {on} \quad \partial  \Omega}\}.$
Examples 1, 2 and 3  show the numerical approximations of  Problem (A)  for different values $m$ and different $\Omega.$

%%%%%%%%%%%%%%%%%%%%%%%%%%%%%%%%%%%%%%%%%%%%%%%%%%%%%%%%%%%%%%%%%%%%%%%%%%%%%%%%%%%%%%%%%%%%%%%%%%%%%%%%%%%%%%%%%%%%%%%%%%%%%%
\begin{example}
Figure \ref{fig:E_2} shows the solution of Problem \eqref{last} in the case of $n=1, m=2$. We choose $f_1=2+sin(x), f_2=1+x^{2}.$ The equation for $u_{1}-u_{2}$ is as follows:

\begin{equation}\label{farid2}
\left \{
\begin{array}{ll}
(u_{1} -u_{2})''=(2+sin x)\chi_{\{u_{1}>0\}} -(1+x^{2})\chi_{\{u_{2}>0\}}, & x \in [-2, 2] \\
u_{1}(-2)=1, \, u_{2}(2)=1.\\
\end{array}
\right.
\end{equation}

\begin{figure}[!htbp]
{\includegraphics[width=.5 \columnwidth]{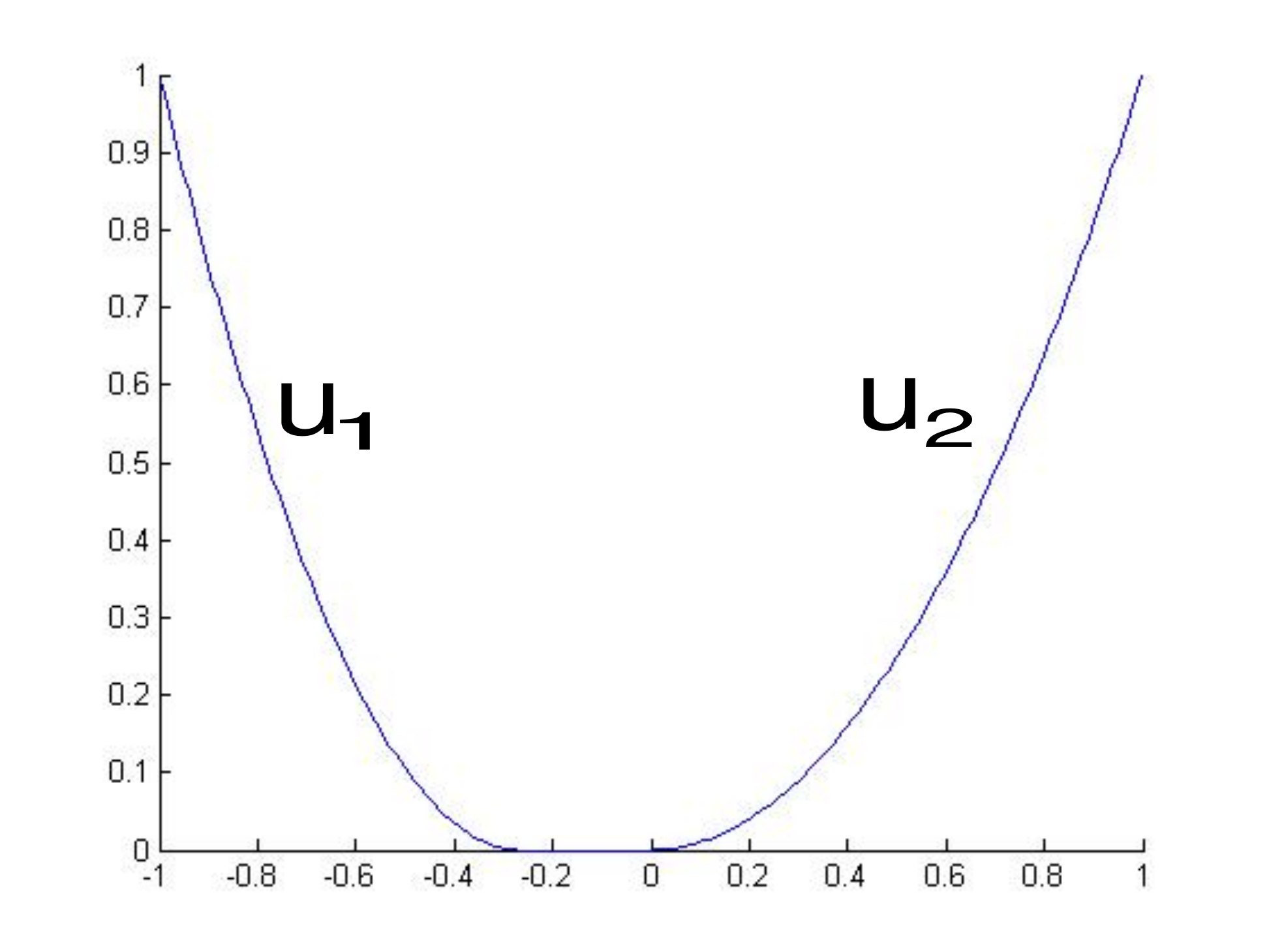}}
\centering
  \caption{The plot of $u_{1}+u_{2}.$}
  \label{fig:E_2}
 \end{figure}
\end{example}
%%%%%%%%%%%%%%%%%%%%%%%%%%%%%%%%%%%%%%%%%%%%%%%% Example 2%%%%%%%%%%%%%%%%%%%%%%%%%%%%%%%%%%%%%%%%%%%%%%%%
\begin{example}
Consider Problem (A) with $m=3$ and $f_1=f_2=1, f_3=\frac{1}{4}.$  The free boundary is shown in  Figure  \ref{fig:E_3}. The boundary value  $g$ is given by
\begin{equation*}
g(x,y)=
\left \{
\begin{array}{lll}
4-x^{2} &   -2 \leq x \leq +2 \  {\&} \  y=-2,\\
4-y^{2} & -2 \leq y \leq +2 \  {\&} \  x=-2,\\
\frac{4-x^{2}}{2} &  -2 \leq x \leq +2 \  {\&} \  y=-x.\\
\end{array}
\right.
\hspace{0.1in}
%\end{equation*}
%\begin{equation*}
\end{equation*}
\end{example}

\begin{figure}[!htbp]
%{\includegraphics[width=7.0cm]{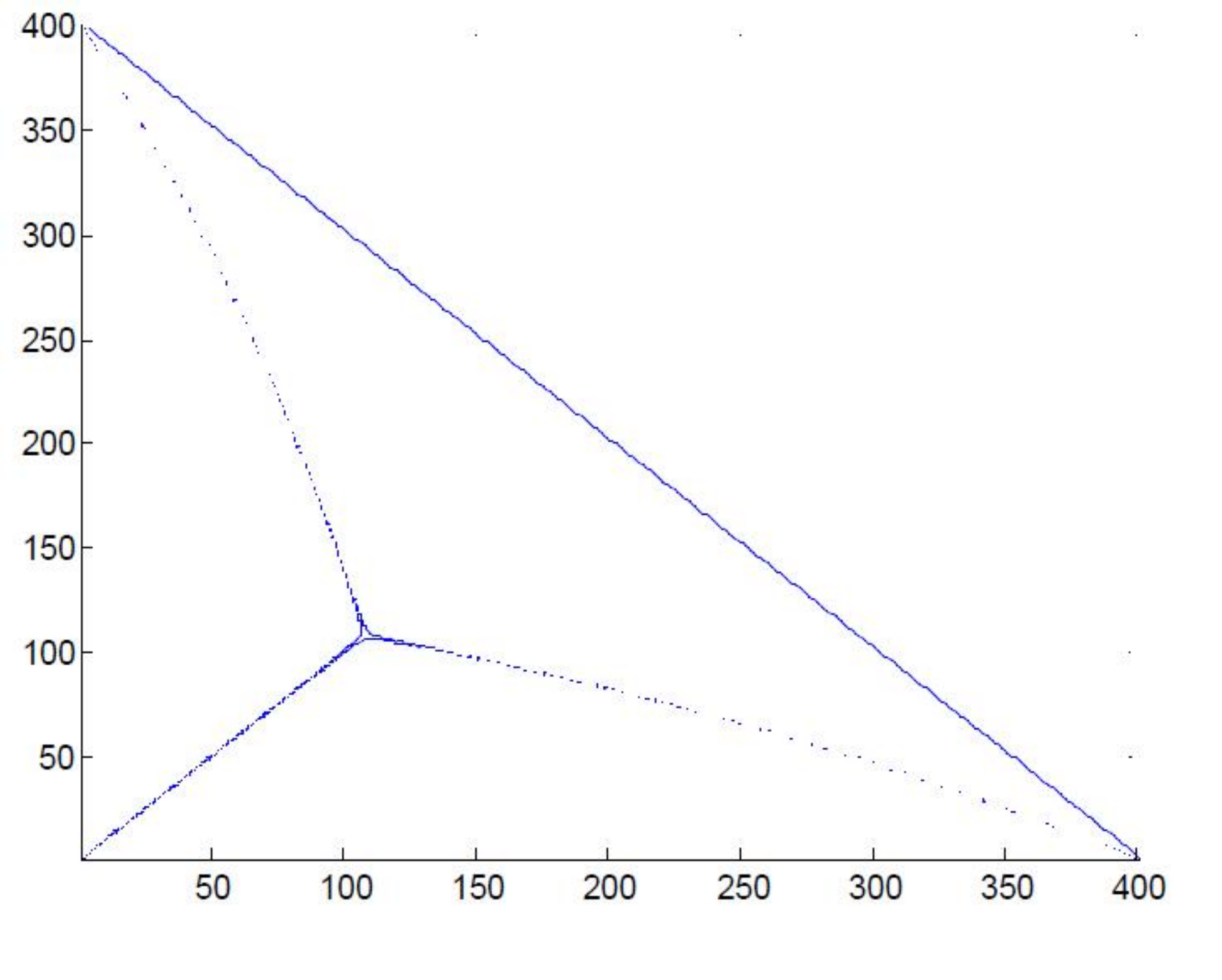}}
%\centering
 \begin{center}
  \subfloat[The free boundaries]{\includegraphics[width=.4\textwidth]{triangel}}
  \subfloat[$u_1+u_2+u_3$]{\includegraphics[width=.47\textwidth]{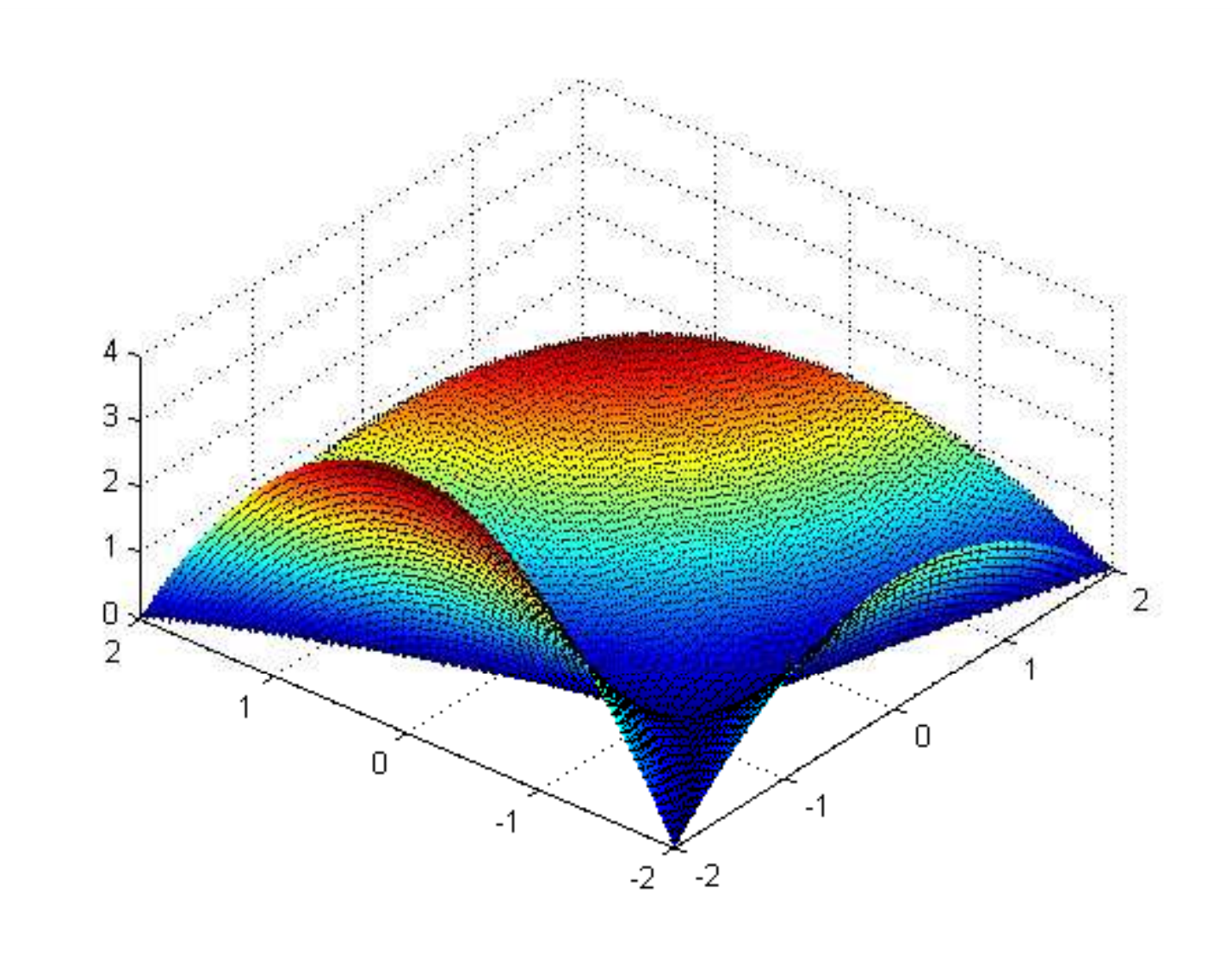}}
  \end{center}
\caption{\small{The left picture shows the free boundaries of solutions. The right picture shows the surface of $u_1+u_2+u_3.$} }\label{fig:E_3}
\end{figure}
%\begin{figure}
%  \centering
%  \subfloat[A gull]{\label{fig:gull}\includegraphics[width=0.7\textwidth]{triangel}}
%  \subfloat[A tiger]{\label{fig:tiger}\includegraphics[width=0.7\textwidth]{triangel}}
%   \caption{Pictures of animals}
%  \label{fig:animals}
%\end{figure}
%%%%%%%%%%%%%%%%%%%%%%%%%%%%%%%%%%%%%%%%%%%%%%%%%%%%%%%%%%%%%%%%%%%%%%%%%%%%%%%%%%%%%%%%%%%%%%

\begin{example}\label{farid20}
Let $\Omega =[-1,1]\times [-1, 1]$ and $m=4, f_1=8;f_2=6;f_3=2;f_4=1.$
The boundary values $\phi_{i},$ (i=1,2,3,4) are given as follows:
\begin{equation*}
\phi_{1}=
\left \{
\begin{array}{lr}
1-x^{2} &   x\in[-1, 1] \  {\&} \  y=1 ,\\
0 & \ \ \text {elsewhere.}
\end{array}
\right.
\hspace{0.1in}
%\end{equation*}
%\begin{equation*}
\phi_{2}=
\left \{
\begin{array}{lr}
1-y^{2} & y \in [-1, 1] \  {\&} \  x=1 ,\\
0 & \ \ \text {elsewhere.}
\end{array}
\right.
\end{equation*}

\begin{equation*}
\phi_{3}=
\left \{
\begin{array}{lr}
1-x^{2} &  x\in [-1 ,1] \  {\&} \  y=-1 ,\\
0 & \ \ \text {elsewhere.}
\end{array}
\right.
%\end{equation*}
\hspace{0.1in}
%\begin{equation*}
\phi_{4}=
\left \{
\begin{array}{lr}
1-y^{2} &   y \in [-1 , 1] \  {\&} \  x=-1 ,\\
0 & \ \ \text {elsewhere.}
\end{array}
\right.
\end{equation*}
\end{example}

\begin{figure}\label{45}
  \centering
 \subfloat[contour]{\label{fig:gull}\includegraphics[width=0.40\textwidth]{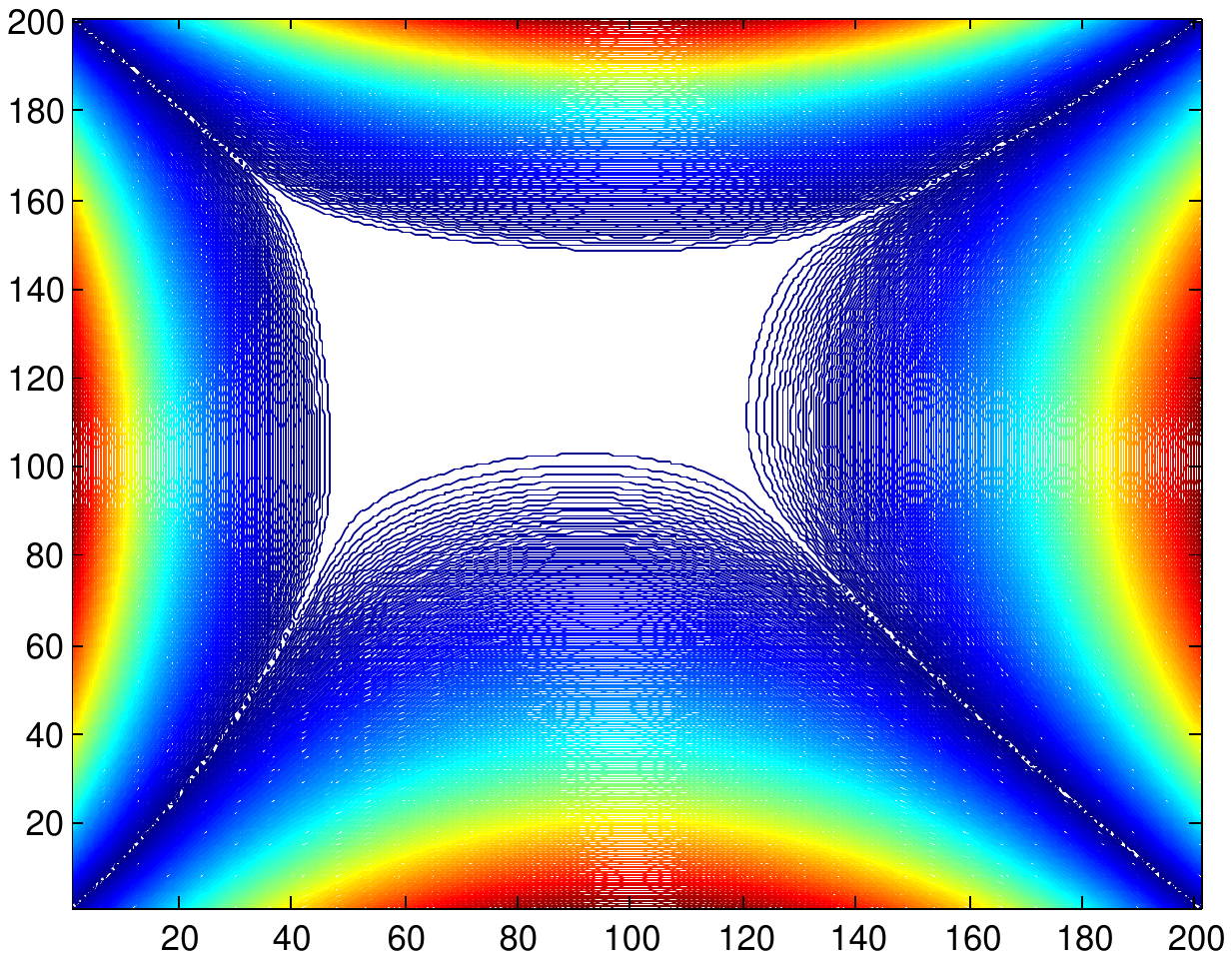}}
  \subfloat[$u_1+u_2+u_3+u_4$ ]{\label{fig:tiger}\includegraphics[width=0.50\textwidth]{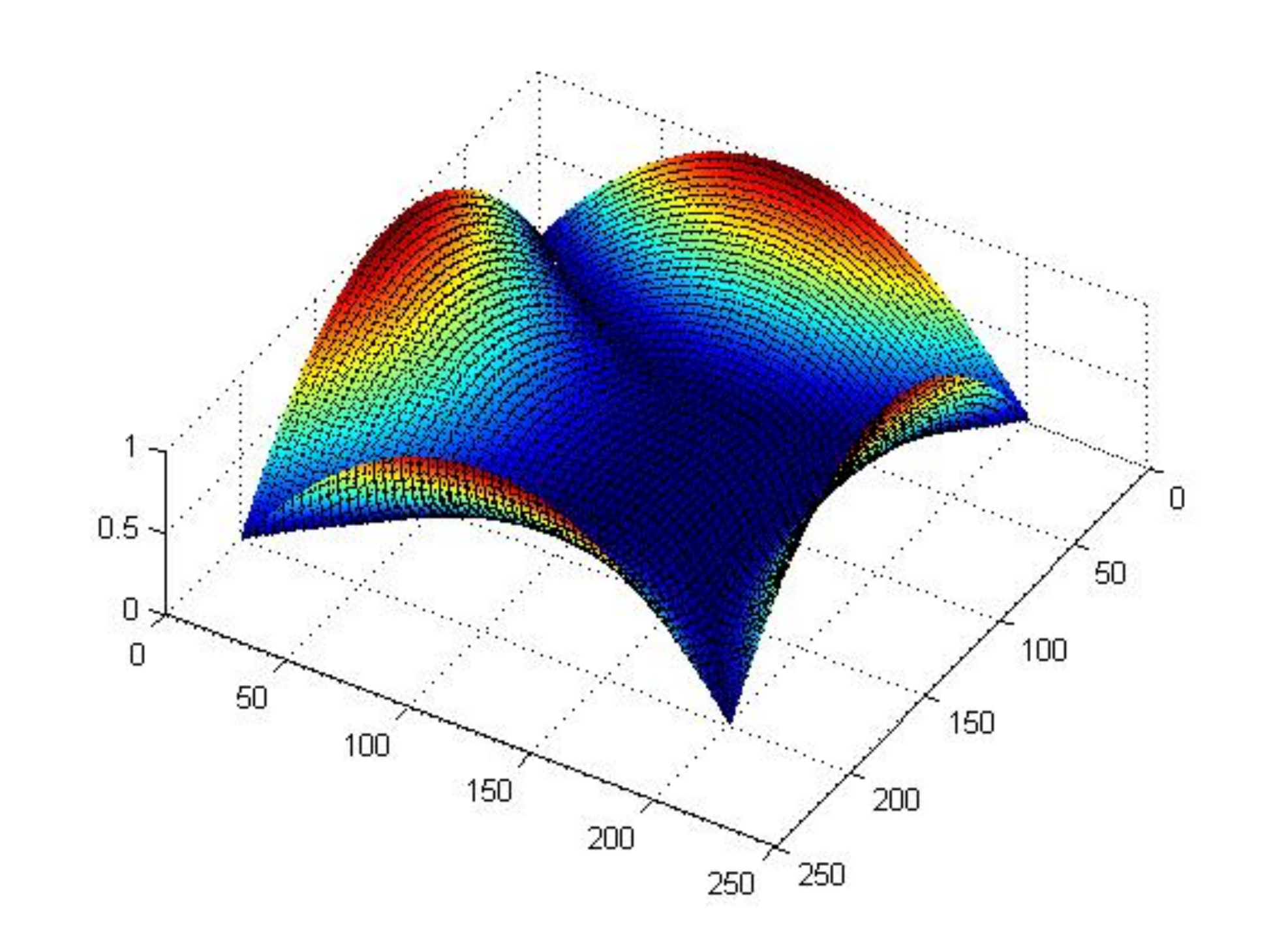}}
  \caption{\small{The left picture shows the contours of solutions  and zero set. The right picture shows the surface of $u_1+u_2+u_3+u_4.$} }
  \label{fig:animals}
\end{figure}

\begin{example}
Let $\Omega$ be as in  previous example and $m=4, f_{1}=0, f_{2}=|x^{2}-y^{2}|, f_{3}=8, f_{4}=|x+y|.$ The boundary conditions  $\phi_{i},$ (i=1,2,3,4)  are the same as in Example \ref{farid20}. The interfaces are shown in Figure 4.
\begin{figure}[!htbp]
{\includegraphics[width=.45 \columnwidth]{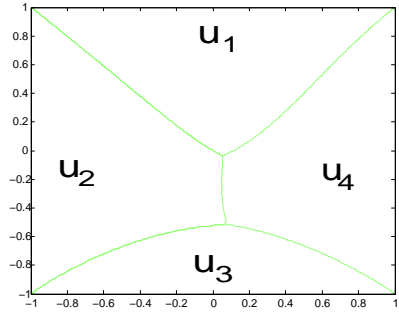}}
\centering
  \caption{\small{The picture shows the free boundaries  of solutions  of $u_1, u_2, u_3$ and $u_4.$} }
  \label{fig:E_2}
 \end{figure}
\end{example}

%%%%%%%%%%%%%%%%%%%%%%%%%%%%%%%%%%%%%%%%%%%%%%%%%%%%%%%%%%%%%%%%%%%%%%%%%%%%%%%%%%%%%%%%%%%%%%%%%%%%%%%%%%%%%%%%%%%%%%%%%%%%%%%%%%%%%%%%
Now consider the following  system of $m$ differential equations for $i=1,\cdots,m,$  as $\varepsilon \rightarrow 0,$
\begin{equation}\label{farid}
\left \{
\begin{array}{lll}
-d_i\Delta u_{i}= \lambda u_{i}(1-u_{i})- \frac{1}{\varepsilon} u_{i}(x)\sum_{j\neq i}^{m} u_{j}^{2}(x)   & \text{ in } \Omega, \\
  u_{i} \geq 0      & \text{ in } \Omega,\\
    u_{i}(x) =\phi_{i}(x)  &  \text{ on }   \partial\Omega. \\
\end{array}
\right.
\end{equation}
\[
\]
\[
\]
\begin{example}
  Let $ \Omega =[0,1]\times [0, 1], m=2,\lambda=1,d_1=1.5,d_2=1.$   The steady  boundary values for $u(x,y,t), v(x,y,t)$  are defined by
\begin{equation*}
\phi(x,0,t)=
\left \{
\begin{array}{ll}
0.5-2.5 x   &  0 \leq x  \leq 0.2,\\
          0 & 0.2 \leq x  \leq 1,
\end{array}
\right.
%\end{equation*}
%\right.
\hspace{0.1in}
%\begin{equation*}
\phi(x,1,t)=
\left \{
\begin{array}{ll}
0.5-\frac{5}{8}x  & 0 \leq x  \leq 0.2,\\
                0 & 0.8 \leq x  \leq 1,\\
\end{array}
\right.
\end{equation*}
 \[
 \phi(0,y,t)=0.5,\quad \quad \phi(1,y,t)=0,
 \]
 and
 \begin{equation*}
\psi(x,0,t)=
\left \{
\begin{array}{ll}
0  &  0 \leq x  \leq 0.2,\\
\frac{-1}{8}+\frac{5}{8}x & 0.2 \leq x  \leq 1,
\end{array}
\right.
%\end{equation*}
%\right.
\hspace{0.1in}
%\begin{equation*}
\psi(x,1,t)=
\left \{
\begin{array}{ll}
0  & 0 \leq x  \leq 0.8,\\
-2+2.5 x & 0.8 \leq x  \leq 1,\\
\end{array}
\right.
\end{equation*}
 \[
 \psi(0,y,t)=0,\quad \quad \psi(1,y,t)=0.5,
  \]

 Figure \ref{boundaries} shows boundary values of $u$ and $v.$ In Figure \ref{contours} the contours plot of  solutions $u(x,y)$ and $v(x,y)$ are presented.

\begin{figure}
{\includegraphics[width=.7\textwidth]{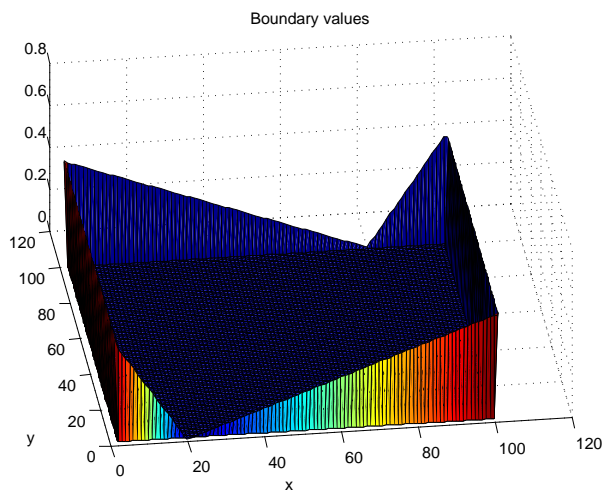}}
\centering
  \caption{Boundary values for $u(x,y);$ left $v(x,y)$; in the right.}\label{boundaries}
  %\end{picture}
\end{figure}

%\begin{figure}[!htbp]
%\begin{picture}(400,300)(40,0)%(370,300)(0,0)
%\subfigure[\label FIG A]{
%\put(0,0){\includegraphics[width=.75\columnwidth]{3}}
%\centering
%\end{picture}
%\caption{contour plot for $u$}\label{fig:D}
%\end{figure}
%\begin{figure}[!htbp]
%\begin{picture}(100,-280)(-20,200)
%\put(0,240){\includegraphics[width=.75\columnwidth]{4}}
%\end{picture}
%\caption{contour plot for $v$.}
%\label{fig:F}
%\end{figure}
 \begin{figure}
 \begin{center}
  \subfloat[contours of $u$]{\includegraphics[width=.5 \textwidth]{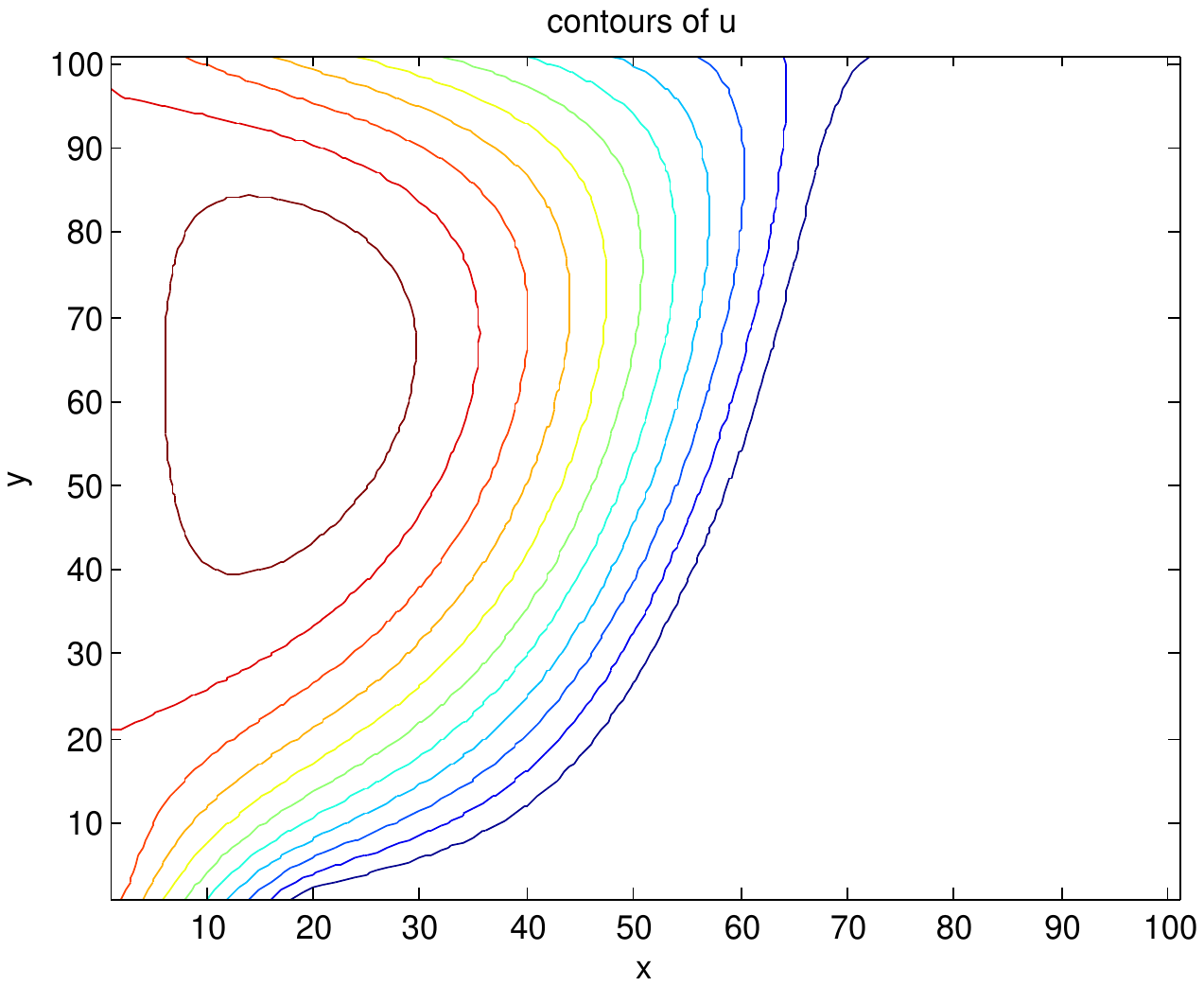}}
  \subfloat[contours of $v$]{\includegraphics[width=.487\textwidth]{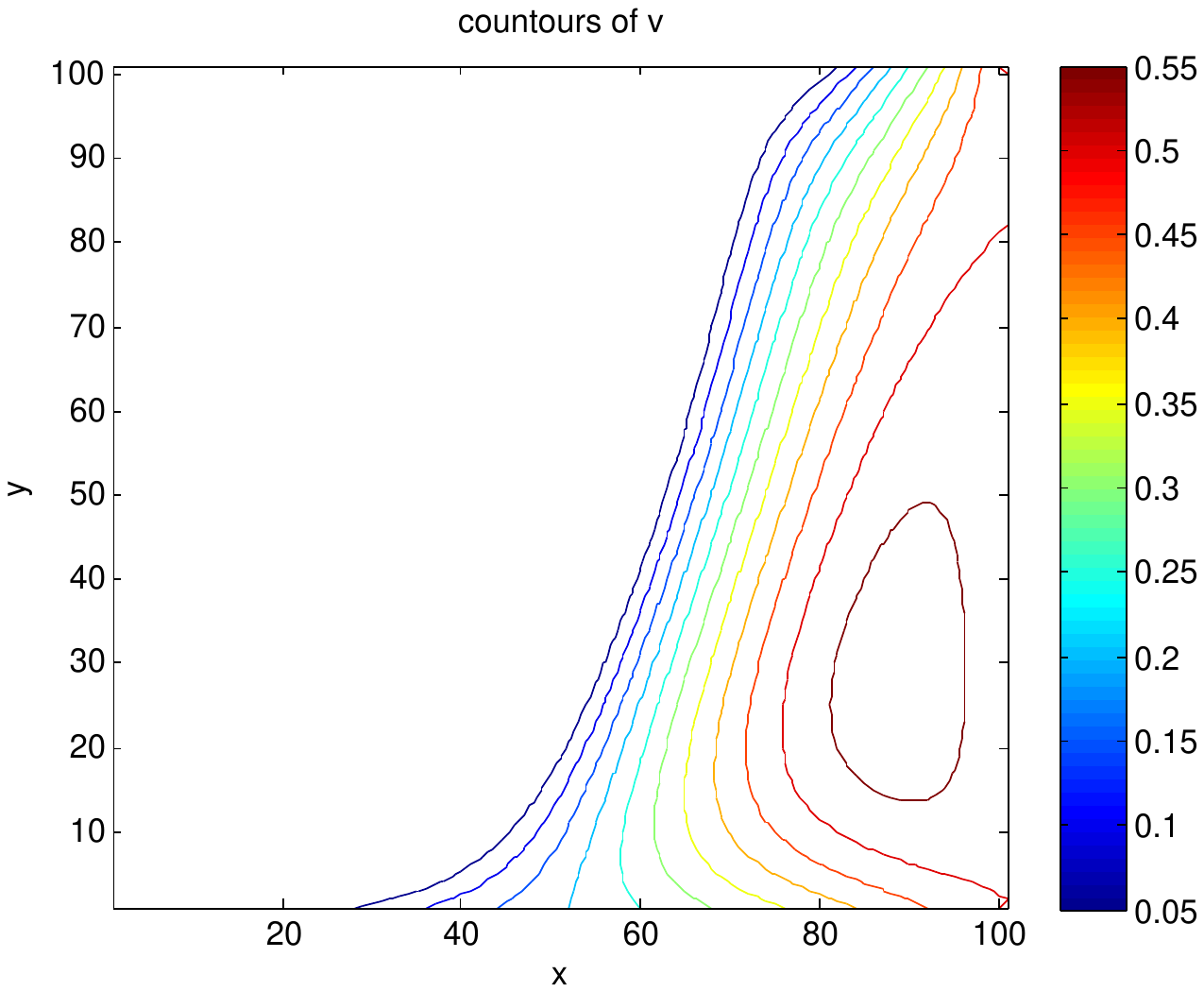}}
  \end{center}
  \caption{Contours of $u$ and $v.$}\label{contours}
 \end{figure}
\end{example}

%%%%%%%%%%%%%%%%%%%%%%%%%%%%%%%%%%%%%%%%%%%%%%%%%%%%%%%%%%%%%%%%%%%%%%%%%%%%%%%%%%%%%%%%%%%%%%%%%%%%%%%%%%%%%%%%%%%%%%%%%%%%%%%%%%%%%%%

%%%%%%%%%%%%%%%%%%%%%%%%%%%%%%%%%%%%%%%%%%%%%%%%%%%%%%%%%%%%%%%%%%%%%%%%%%%%%%%%%%%%%%%%%%%%%%%%%%%%%%%%%%%%%%%%%%%%%%%%%%%%%%

\newpage

\vskip 50pt
\bibliographystyle{acm}%

\bibliography{farid}

\end{large}

\end{document}